\documentclass[12pt]{article}%

\usepackage{amsmath,enumerate}
\usepackage{amsfonts}
\usepackage{amssymb}
\usepackage{multicol}
\usepackage{epic,eepic,color}
\setlength{\topmargin}{-.5in}
\setlength{\textheight}{9in}
\setlength{\oddsidemargin}{.125in}
\setlength{\textwidth}{6.25in}

\setcounter{MaxMatrixCols}{30}
\newtheorem{theorem}{Theorem}[section]

\newtheorem{lemma}[theorem]{Lemma}

\newenvironment{proof}[1][Proof]{\noindent\textbf{#1.} }
{\hfill \ \rule{0.5em}{0.5em}}

\newcommand{\nn}{\mathbb{N}}

\begin{document}

\title{A path Tur\'{a}n problem for infinite graphs}
\author{Xing Peng \thanks{Center for Applied Mathematics,  Tianjin University, Tianjin  300072,   China, \texttt{x2peng@tju.edu.cn}} \and Craig Timmons\thanks{Department of Mathematics and Statistics, California State University Sacramento, \texttt{craig.timmons@csus.edu}. This work was supported by a grant from the Simons Foundation (\#359419, Craig Timmons).}}
\date{}

\maketitle

\begin{abstract}
Let $G$ be an infinite graph whose vertex set is the set of positive integers, and let $G_n$ be the subgraph of
$G$ induced by the vertices $\{1,2, \dots , n \}$.  An increasing path of length $k$ in $G$, denoted $I_k$, is a sequence
of $k+1$ vertices $1 \leq i_1 < i_2 < \dots < i_{k+1}$ such that $i_1, i_2, \ldots, i_{k+1}$ is a path in $G$.
For $k \geq 2$, let $p(k)$ be the supremum of
$\liminf_{ n \rightarrow \infty} \frac{ e(G_n) }{n^2}$ over all $I_k$-free graphs $G$.  In 1962, Czipszer, Erd\H{o}s, and Hajnal proved that
$p(k) = \frac{1}{4} (1  - \frac{1}{k})$ for $k \in \{2,3 \}$.  Erd\H{o}s conjectured that this holds for all $ k \geq 4$.  This was disproved
for certain values of $k$ by Dudek and R\"{o}dl who showed that $p(16) > \frac{1}{4} (1 - \frac{1}{16})$ and $p(k) > \frac{1}{4} + \frac{1}{200}$ for all $k \geq 162$.  Given that the conjecture of Erd\H{o}s is true for $k \in \{2,3 \}$ but false for large $k$, it is natural to ask for the smallest value of $k$ for which $p(k) > \frac{1}{4} ( 1 - \frac{1}{k} )$.
In particular, the question of whether or not $p(4) = \frac{1}{4} ( 1 - \frac{1}{4} )$ was mentioned by Dudek and R\"{o}dl as an open problem.
We solve this problem by proving that
$p(4) \geq \frac{1}{4} (1 - \frac{1}{4} ) + \frac{1}{584064}$ and $p(k) > \frac{1}{4} (1 - \frac{1}{k})$ for $4 \leq k \leq 15$.  
We also show that $p(4) \leq \frac{1}{4}$ which improves upon the previously best known upper bound on $p(4)$.  
Therefore, $p(4)$ must lie somewhere between $\frac{3}{16} + \frac{1}{584064}$ and $\frac{1}{4}$.
\end{abstract}


\section{Introduction}

Let $G$ be an infinite graph with $V(G) = \{1,2,3, \dots \}$.
An \emph{increasing path of length} $k$, denoted $I_k$, is a sequence of $k+1$ vertices $i_1, \dots , i_{k+1}$ such that $i_1 < i_2 < \dots < i_{k+1}$ and $i_j$ is adjacent to $i_{j+1}$ for $1 \leq j \leq k$.
An infinite graph $G$ is \emph{$I_k$-free} if it contains no increasing path of length $k$.  For an infinite graph $G$, let $G_n$ be the subgraph of $G$ induced by the vertices $\{1, 2, \dots , n \}$
and $p(G) = \displaystyle\liminf_{n \rightarrow \infty} \frac{ e(G_n ) }{n^2}$.
Define the \emph{path Tur\'{a}n number of $I_k$}, denoted $p(k)$, to be the value
\begin{center}
$p(k) = \sup \{ p(G) : G ~ \textrm{is $I_k$-free} \}$.
\end{center}
Czipszer, Erd\H{o}s, and Hajnal \cite{CEH} introduced these path Tur\'{a}n numbers and proved the following.
\begin{theorem}[Czipszer, Erd\H{o}s, Hajnal \cite{CEH}]\label{ceh theorem}
The path Tur\'{a}n numbers $p(2)$ and $p(3)$ satisfy
\begin{center}
$p(2) = \frac{1}{8}$ and $p(3) = \frac{1}{6}$.
\end{center}
\end{theorem}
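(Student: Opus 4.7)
My plan is to prove each equality by matching lower and upper bounds, using the same scheme in both cases.

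For the lower bound $p(2) \geq 1/8$, I exhibit the graph with $U = \{2k - 1 : k \geq 1\}$, $D = \{2k : k \geq 1\}$, and $\{u, d\} \in E$ iff $u \in U$, $d \in D$, and $u < d$. This graph is $I_2$-free, since in any triple $i_1 < i_2 < i_3$ with $i_1 i_2, i_2 i_3 \in E$, the vertex $i_2$ would simultaneously have to lie in $U$ (to accept an edge to the larger $i_3$) and in $D$ (to accept an edge from the smaller $i_1$), which is impossible. Each odd $u = 2k - 1 \leq 2m$ contributes $m - k + 1$ edges, so $e(G_{2m}) = m(m+1)/2$ and $\liminf e(G_n)/n^2 = 1/8$. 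For $p(3) \geq 1/6$ I use the analogous three-part construction $V_i = \{3k + i + 1 : k \geq 0\}$, $i = 0, 1, 2$, placing the edge $\{u, v\}$ for $u < v$ whenever $u \in V_i$, $v \in V_j$ with $i < j$; any increasing path has strictly increasing colors in $\{0, 1, 2\}$, so none has length $3$, and a parallel count yields $e(G_{3m}) = 3 m(m+1)/2$ and $\liminf e(G_n)/n^2 = 1/6$.

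For the upper bounds, the structural input I exploit is that in any $I_k$-free graph $G$ the function $L(v) := $ length of a longest increasing path ending at $v$ defines a map $L : \mathbb{N} \to \{0, 1, \dots, k-1\}$ such that every edge $uv$ with $u < v$ satisfies $L(u) < L(v)$. For $k = 2$ this partitions $\mathbb{N}$ into $U \sqcup D$ with edges only of the form $\{u, d\}$, $u \in U$, $d \in D$, $u < d$, and hence $e(G_n) \leq M(n) := \#\{(u, d) \in (U \cap [n]) \times (D \cap [n]) : u < d\}$. Writing $\alpha(n) = |U \cap [n]|$, $\beta(n) = n - \alpha(n) = |D \cap [n]|$, and $F(n) = \sum_{t=1}^{n-1} \beta(t)$, a direct double count gives the identity
\[
M(n) \;=\; n \beta(n) \;-\; \tfrac{1}{2}\beta(n)(\beta(n)+1) \;-\; F(n).
\]

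The heart of the argument for $p(2) \leq 1/8$ is a limit/averaging step. Suppose for contradiction that $\liminf_n M(n)/n^2 > 1/8 + \delta$ for some $\delta > 0$, and set $q(n) = \beta(n)/n$. The trivial inequality $M(n) \leq \alpha(n) \beta(n) = n^2 q(n)(1-q(n))$ pins $q(n)$ into a fixed compact sub-interval of $(0, 1)$, while the identity above rearranges to $F(n)/n^2 < q(n) - q(n)^2/2 - 1/8 - \delta + o(1)$ for all large $n$. On the other hand, with $q_- := \liminf_n q(n)$, for every $\epsilon > 0$ one has $\beta(t) \geq (q_- - \epsilon) t$ eventually, and summing gives $\liminf_n F(n)/n^2 \geq q_-/2$. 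Passing to a subsequence $n_k$ with $q(n_k) \to q_-$ and combining the two bounds at $n = n_k$ yields $q_-/2 \leq q_- - q_-^2/2 - 1/8 - \delta$, i.e., $(q_- - 1/2)^2 + 2\delta \leq 0$, a contradiction. Thus $p(G) \leq \liminf_n M(n)/n^2 \leq 1/8$, and combined with the construction this gives $p(2) = 1/8$.

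For $p(3) \leq 1/6$ I run the same scheme with the tripartition $V_0 \sqcup V_1 \sqcup V_2$ and the three pair-counts $M_{01}, M_{02}, M_{12}$, each of which satisfies an Abel-style identity analogous to the one above, expressing it in terms of $|V_i \cap [n]|$ and its cumulative sum. The hard part will be the bookkeeping: with densities $p_0, p_1, p_2$ summing to $1$, the three quadratic constraints combine into a two-parameter optimization whose maximum value $1/6$ is attained at $p_0 = p_1 = p_2 = 1/3$. The final contradiction is then obtained, as in the $k = 2$ case, by bounding each cumulative average $\sum_{t < n} |V_i \cap [t]|/n^2$ below by the liminf of the density $p_i$ and comparing with the instantaneous identities along a subsequence realizing $\liminf_n (M_{01} + M_{02} + M_{12})/n^2$.
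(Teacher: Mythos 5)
Both constructions, and the $p(2)\le 1/8$ half, are correct. Your upper bound for $p(2)$ also takes a genuinely different, more self-contained route than the Czipszer--Erd\H{o}s--Hajnal argument that the paper imitates in Section~5: instead of reindexing through the nested position sequences $u_i,v_i,\dots$ and invoking Lemma~\ref{lemma}, you write a single exact Abel identity for $M(n)$ in terms of $\beta(n)$ and $F(n)$ and complete the square by passing to a subsequence along which the one density $q(n)=\beta(n)/n$ realizes its $\liminf$. For $k=2$ this is clean and complete.

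The sketch of $p(3)\le 1/6$ has a real gap, and it is not mere bookkeeping. You assert that each of $M_{01},M_{02},M_{12}$ satisfies an Abel-style identity expressible in terms of the counting functions $\alpha_i(n)=|V_i\cap[n]|$ and their cumulative sums $F_i(n)=\sum_{t<n}\alpha_i(t)$. That fails for $M_{12}$: with $r=\alpha_1(n)$, $s=\alpha_2(n)$, and $N_{21}(n)=\#\{(w,v):w\in V_2,\ v\in V_1,\ w<v\le n\}$, the exact identity is
\[
M_{01}(n)+M_{02}(n)+M_{12}(n)=nr+ns-F_1(n)-F_2(n)-\tfrac{1}{2}r(r+1)-\tfrac{1}{2}s(s+1)-N_{21}(n),
\]
and $N_{21}$ depends on the fine interleaving of $V_1$ and $V_2$, not on the $\alpha_i$ and $F_i$ alone. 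If you drop $N_{21}\ge 0$ (equivalently, bound $M_{12}\le rs$) and run your averaging argument with the subsequence realizing $\liminf (\alpha_1+\alpha_2)/n$, the resulting optimization tops out near $1/4$, not $1/6$; the uncontrolled $N_{21}$ is exactly where the factor is lost. Note also that the subsequence you announce for $k=3$ (realizing $\liminf E(n)/n^2$) is not the one that made the $k=2$ proof work, where you had to pass to the subsequence realizing $\liminf q(n)$ --- that freedom was essential.

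Controlling the interleaving term is precisely what the CEH nesting supplies. Specialized to $k=3$: record the positions $u_1<u_2<\cdots$ of symbols in $\{2,3\}$, and inside that subsequence the positions $v_1<v_2<\cdots$ of symbol $3$; the double-counting identity $jk=\sum_{i\le k}v_i+\sum_{i\le j}\alpha_{\{3\}}(u_i)$, the pointwise estimate $\alpha_{\{3\}}(u_j)\ge k-1$, and Lemma~\ref{lemma} applied to the compound sequence $u_j-\alpha_{\{3\}}(u_j)$ together give, after optimizing in $k/j$ and then in $j/u_j$, exactly $1/6$. This nesting, or an equivalent device that explicitly controls $N_{21}$, is the missing ingredient: the ``hard bookkeeping'' you defer requires a new idea, not just more algebra.
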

They also gave a simple construction that shows
\begin{center}
$p(k) \geq \frac{1}{4} \left( 1 - \frac{1}{k} \right)$ for all $k \geq 2$
\end{center}
and asked if $p(k) = \frac{1}{4} \left(1 - \frac{1}{k} \right)$ holds for $k \geq 4$.
Erd\H{o}s conjectured in \cite{erdos1} and \cite{erdos2} that $p(k) = \frac{1}{4} \left( 1 - \frac{1}{k} \right)$ holds for all $k \geq 2$.
In 2008, Dudek and R\"{o}dl \cite{DR} disproved the conjecture for certain values of $k$ by proving the
following result.
\begin{theorem}[Dudek, R\"{o}dl \cite{DR}]\label{dr lower bound}
The path Tur\'{a}n number $p(16)$ satisfies
\begin{equation*}
p(16) > \frac{1}{4} \left( 1 - \frac{1}{16} \right).
\end{equation*}
Furthermore,
\begin{equation*}
p(k) > \frac{1}{4} + \frac{1}{200}
\end{equation*}
for all $k \geq 162$.
\end{theorem}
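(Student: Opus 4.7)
The plan is to exhibit, for each $k$ under consideration, an $I_k$-free infinite graph whose density beats $\tfrac14(1-\tfrac1k)$. The natural strategy has two parts: first construct a finite $I_k$-free graph $H$ on $[N]$ whose edge density $\alpha = e(H)/N^2$ is above the desired target, and then lift $H$ to an infinite $I_k$-free graph $G$ on $\mathbb{N}$ with $p(G)$ close to $\alpha$.

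For the finite construction, I would use a probabilistic model. A convenient baseline that is automatically $I_k$-free is assigning each $v \in [N]$ a uniform label $L(v) \in \{0,\ldots,k-1\}$ and declaring $uv$ an edge whenever $u<v$ and $L(u)<L(v)$: by pigeonhole no increasing path can exceed length $k-1$, and the expected density is precisely $\tfrac14(1-\tfrac1k)$, which only matches the lower bound. To beat it for $k = 16$, I would enrich this baseline with a small carefully chosen collection of extra edges (say within one label class, arranged so that no new $I_{16}$ is completed by using them); an alteration/first-moment calculation should yield a strictly positive density gain. For $k \geq 162$, I would instead use a denser random model such as $G(N,p)$ restricted to a structured support (short-range or multipartite edges) with $p$ tuned so that the expected density exceeds $\tfrac14 + \tfrac{1}{100}$. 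Because $k$ is very large, the expected number of $I_k$-paths is exponentially small compared to the expected edge count, and a deletion argument retains density strictly above $\tfrac14 + \tfrac{1}{200}$ after killing one edge per forbidden path.

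For the lift, partition $\mathbb{N}$ into successive intervals $J_1, J_2, \ldots$ of rapidly growing lengths, and place in each $J_t$ a blow-up of $H$ (each vertex of $H$ replaced by a block of $|J_t|/N$ consecutive integers, joined by complete bipartite graphs between blocks whenever the corresponding vertices of $H$ are adjacent), with no edges between distinct $J_t$'s. Any increasing path in such a blow-up projects to an increasing path of equal length in the base graph $H$, so $G$ inherits $I_k$-freeness.

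The main obstacle is verifying $p(G) \geq \alpha - o(1)$. Because $p(G)$ is a $\liminf$, one must control $e(G_n)/n^2$ for \emph{every} large $n$, not just for $n$ at the right endpoints of the blocks $J_t$. For $n$ in the interior of some long $J_t$, the count $e(G_n)$ depends on densities of initial segments of the blow-up of $H$, so $H$ itself needs to have approximately uniform density on all long initial segments $H[\{1,\ldots,j\}]$. This requirement rules out naively chosen bipartite templates (whose first half carries no edges), but is automatically satisfied by models built on random labelings or on translation-symmetric/self-similar rules, for which concentration gives uniform initial-segment density. Reconciling the two demands — density strictly above the target \emph{and} uniform initial-segment density — is the technical crux of the argument.
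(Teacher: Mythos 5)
This theorem is cited from Dudek--R\"{o}dl~\cite{DR}; the paper does not reprove it, but Sections~2--3 describe the machinery used there (and refined here for $4 \le k \le 15$), so a meaningful comparison is possible. Your proposal takes a genuinely different route from that machinery, and several steps in it do not work.

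The most serious gap is the deletion argument for $k \ge 162$. In $G(N,p)$ with $p$ a constant large enough that the expected density exceeds $\tfrac14 + \tfrac{1}{100}$, the expected number of increasing paths of length $k$ is of order $\binom{N}{k+1} p^k = \Theta(N^{k+1})$, which for fixed $k \ge 2$ overwhelms the $\Theta(N^2)$ edges as $N \to \infty$. So ``killing one edge per forbidden path'' destroys essentially every edge; the claim that the number of $I_k$-paths is ``exponentially small compared to the expected edge count'' is false. Restricting the support to short-range or layered edges to control the paths pushes you right back to a $k$-layered structure whose density cannot exceed $\tfrac14(1-\tfrac1k)$, so the proposal gives no mechanism for crossing the $\tfrac14$ barrier.

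The enrichment idea for $k=16$ has a related problem: adding ``flat'' edges inside a label class to the baseline labeling can create increasing paths of length $k$. A label-increasing path already attains length $k-1$ on $k$ labels; splicing in even one flat step inside a label class yields length $k$, so the graph is no longer $I_k$-free unless the extra edges are chosen so that they can never be concatenated with a full label-increasing run. You do not give such a rule, and the first-moment/alteration sketch neither preserves $I_k$-freeness nor quantifies a gain.

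The lift step is also where the real technical content lives, and it is the part you leave unresolved. With no edges between successive intervals $J_t$ and rapidly growing $|J_t|$, the $\liminf$ is governed by densities of \emph{initial segments} of the template, and a naive blow-up gives a density dip inside each block. You correctly identify this as the crux, but the proposal contains no construction that achieves both (i) density above the target and (ii) uniform initial-segment density. The Dudek--R\"{o}dl approach (and the refinement in Section~3 of this paper) sidesteps the whole issue: one works directly in the sequence formulation (each $n \in \mathbb{N}$ gets a symbol $c_n \in [k]$, and edges are exactly the pairs $i<j$ with $c_i<c_j$), so $I_k$-freeness is automatic and no flat edges or random graphs are needed. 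The density gain then comes from a deterministic block-doubling sequence $C = B_1 B_2 B_3 \cdots$ built from atoms $D_1,\ldots,D_l$, whose geometric growth makes every ``cut point'' $n$ land at a bounded set of normalized positions; Lemma~\ref{count 3} in the paper then reduces the $\liminf$ to a finite optimization over the cut parameter $(t,\epsilon)$. That is what lets one certify a $\liminf$ strictly above $\tfrac14(1-\tfrac1k)$ --- something your construction does not provide.
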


The results of \cite{DR} and the conjecture $p(k) = \frac{1}{4} \left( 1 - \frac{1}{k} \right)$ is
mentioned in a survey paper of Komj\'{a}th \cite{komjath} which discusses some of the work of Erd\H{o}s in infinite graph theory.

Theorem \ref{ceh theorem} and Theorem \ref{dr lower bound}
suggest the following question: for which values of $k$ does one have
\begin{equation}\label{pk equation}
p(k)= \frac{1}{4} \left( 1 - \frac{1}{k} \right)
\end{equation}
and in particular, what is the smallest value of $k$ for which (\ref{pk equation}) holds?  Our first result is a construction
that shows (\ref{pk equation}) does not hold for several small values of $k$ and disproves the conjecture of Erd\H{o}s in the most difficult case; the case when $k = 4$.

\begin{theorem}\label{new lower bound}
If $4 \leq k \leq 15$, then
\[
p(k) > \frac{1}{4} \left( 1 - \frac{1}{k} \right).
\]
\end{theorem}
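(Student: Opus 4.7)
The plan is to construct, for each $4\le k\le 15$, an explicit infinite $I_k$-free graph $G_k$ whose density $p(G_k)$ strictly exceeds $\frac{1}{4}\!\left(1-\frac{1}{k}\right)$. I would proceed in two stages. First, produce a finite $I_k$-free gadget $H_k$ on a vertex set $[N_k]$ satisfying $e(H_k)/N_k^2 > \frac{1}{4}\!\left(1-\frac{1}{k}\right)$. Second, lift $H_k$ to an infinite graph via an iterated blow-up: replace each vertex of $H_k$ by a scaled copy of $H_k$ and iterate. A blow-up of an $I_k$-free graph is again $I_k$-free, because any increasing path in the blow-up visits blocks in strictly increasing order and projects to an increasing path of the same length in the base; a routine computation shows that the density of the $j$-th iterate on $[N_k^j]$ tends to $e(H_k)/(N_k(N_k-1))$ as $j\to\infty$. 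Arranging the iterates into a single graph on $\mathbb{N}$ whose liminf density equals this value is standard geometric-series bookkeeping.

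All of the work is therefore concentrated in constructing the gadget $H_k$. My starting point is the extremal construction of Czipszer, Erd\H{o}s, and Hajnal: assign each $v\in[N]$ the color $c(v)=((v-1)\bmod k)+1$ and include every edge $uv$ with $u<v$ and $c(u)<c(v)$. This graph is $I_k$-free because colors strictly increase along any increasing path, so there are at most $k$ vertices on any such path; its density tends to exactly $\frac{1}{4}\!\left(1-\frac{1}{k}\right)$. For every vertex $w$ define $f^-(w)$ (respectively $f^+(w)$) to be the length of the longest increasing path ending at (respectively starting from) $w$; an edge $uv$ with $u<v$ is safe to add precisely when $f^-(u)+f^+(v)\le k-2$. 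In the pure baseline every vertex satisfies $f^-(v)+f^+(v)=k-1$, so no single addition is immediately safe. The key idea is to first delete a small, structured set of baseline edges, so that the $f^\pm$-labels of many vertices drop strictly below their baseline values, thereby opening room to add strictly more new edges than were deleted. For $k=4$, this remove-and-add trade-off should be parameterisable as a small optimisation whose optimum yields the explicit gain $\frac{1}{584064}$ claimed in the abstract; for $5\le k\le 15$, the same principle, together with an appropriate insertion of a $k=4$-style gain-gadget inside a longer CEH frame, yields a strictly positive (though unspecified) gain, which is all that Theorem~\ref{new lower bound} asserts.

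The step I expect to be the main obstacle is the verification of $I_k$-freeness after augmentation. Each newly inserted edge may simultaneously extend many existing increasing paths, and a single mistake in the bookkeeping of $f^-,f^+$ creates a forbidden $I_k$. I would manage this by maintaining $f^-,f^+$ as explicit labels and updating them after each modification, and by preserving the invariant $f^-(v)+f^+(v)\le k-1$ at every vertex, which is equivalent to $I_k$-freeness. For the critical $k=4$ case the gadget is likely small enough for an entirely explicit verification, possibly computer-assisted; for the remaining values of $k$, confining the new edges to a single layer of the CEH frame keeps the bookkeeping tractable by hand.
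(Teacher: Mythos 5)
Your plan has a fatal gap at the lifting step, and the gap is not a detail that can be patched. You propose to go from a finite $I_k$-free ordered graph $H_k$ on $[N_k]$ to an infinite graph by the \emph{iterated blow-up in which each vertex is replaced by a scaled copy of $H_k$} (i.e.\ the lexicographic self-product), and you justify $I_k$-freeness of the result by saying an increasing path ``visits blocks in strictly increasing order and projects to an increasing path of the same length in the base.'' That is only true when the blocks are \emph{independent sets}. When the blocks are copies of $H_k$, an increasing path can take up to $k$ steps inside a block, jump to a later block, take up to $k$ more steps there, and so on. If the distinct outer block indices visited form an increasing path in $H_k$ (length at most $k-1$, so at most $k$ blocks) and each in-block segment is itself an increasing path in $H_k$ (at most $k$ vertices), the composed path can have up to $k^2$ vertices. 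Concretely, if $H_k$ is the complete graph on $[k]$ (which is $I_k$-free and has density $\binom{k}{2}/k^2$ well above $\tfrac14(1-\tfrac1k)$), then $H_k[H_k]$ is the complete graph on $[k^2]$, which certainly contains an $I_k$. Your density formula $e(H_k)/(N_k(N_k-1))$ is exactly the one for the lexicographic product, and the fact that it is so easy to make large is a symptom of the $I_k$-freeness constraint having been lost. Conversely, if you use independent-set blow-ups, $I_k$-freeness is preserved but the per-vertex-pair density is \emph{not} improved by iterating, so the lifting is vacuous; the ``standard geometric-series bookkeeping'' you invoke for the $\liminf$ is in fact the delicate part of the problem, because the $\liminf$ is taken over all $n$, including those landing in the middle of a large recursive block, where the density can drop.

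The paper sidesteps both issues by never leaving the Dudek--R\"odl sequence framework: any infinite sequence over $[k]$ automatically yields an $I_k$-free graph (an increasing path has strictly increasing symbols, so at most $k$ vertices), so $I_k$-freeness is free and all the work goes into controlling $\liminf_n \mathcal{S}_C(n)/n^2$. The construction concatenates blocks $B_1 B_2 B_3 \cdots$ where $B_m$ consists of $2^{m-1}$ copies of each atom $D_j$ built from a $k\times l$ nonnegative integer matrix $D$; Lemma~\ref{count 3} then gives a lower bound on the $\liminf$ that explicitly accounts for the partial block $B_{m+1}$ via a parameter $(t,\epsilon)$ marking where the cut $n$ falls, and the $\liminf$ is bounded by a finite minimization over $t\in[l]$ and $\epsilon\in[0,1]$. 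Exhibiting a good matrix $D(k)$ for each $4\le k\le 15$ and running the optimization finishes the proof. Your remove-and-add heuristic for the gadget, while not unreasonable as a search strategy in spirit, is also underdeveloped: you neither produce an explicit $H_k$ nor verify the invariant $f^-(v)+f^+(v)\le k-1$ after the modifications, and in any case it would only be useful once the lifting step is sound. To salvage your approach you would need to abandon the lexicographic blow-up entirely and replace it with a sequence-based self-similar concatenation together with an explicit $\liminf$ analysis of the kind done in Lemma~\ref{count 3}.
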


In light of Theorem \ref{new lower bound} and the results of \cite{DR}, it seems likely that (\ref{pk equation}) fails for all $k \geq 4$.

Using the argument of \cite{CEH}, we obtained the following upper bound on $p(4 )$.

\begin{theorem}\label{4 upper bound}
The path Tur\'{a}n number $p(4)$ satisfies
\[
p(4) \leq \frac{1}{4}
\]
\end{theorem}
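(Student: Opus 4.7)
I would adapt the labelling argument of Czipszer--Erd\H{o}s--Hajnal. Let $G$ be an $I_4$-free infinite graph on $\nn$ and for each vertex $v$ let $\sigma(v)$ denote the length of the longest increasing path in $G$ ending at $v$ and $\tau(v)$ the length of the longest increasing path starting at $v$. Because $G$ is $I_4$-free, $\sigma(v), \tau(v) \in \{0,1,2,3\}$ and $\sigma(v) + \tau(v) \leq 3$, and every edge $ab$ with $a<b$ satisfies $\sigma(b) \geq \sigma(a)+1$ and $\tau(a) \geq \tau(b)+1$. Set $U_0 = \sigma^{-1}(0)$ and $W_0 = \tau^{-1}(0)$; each edge $ab$ with $a<b$ then has $a \notin W_0$ and $b \notin U_0$.

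The key observation, obtained by pushing the CEH argument one step further, is the following dichotomy. If some vertex $v$ had both a down-neighbour $u \notin U_0$ and an up-neighbour $w \notin W_0$, then concatenating an increasing path of length $2$ ending at $v$ through $u$ with an increasing path of length $2$ starting at $v$ through $w$ would produce an $I_4$, a contradiction. Consequently, every vertex in $M := V \setminus (U_0 \cup W_0)$ satisfies either $N^-(v) \subseteq U_0$ or $N^+(v) \subseteq W_0$. A further inspection of the permissible $(\sigma,\tau)$-labels shows that the only edges entirely within $M$ go from the class $C_{12} = \{v : \sigma(v)=1,\tau(v)=2\}$ to the class $C_{21} = \{v : \sigma(v)=2,\tau(v)=1\}$.

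I would then split the edges of $G_n$ according to which of $U_0,W_0,M$ contains each endpoint, write $u = |U_0 \cap [n]|$, $w = |W_0 \cap [n]|$, $m = |M \cap [n]|$, and bound each type by the appropriate product of class sizes; using the dichotomy and the restriction on internal $M$-edges, together with the fact that $|C_{12}\cap[n]|\cdot|C_{21}\cap[n]| \le m^2/4$, one obtains $e(G_n) \leq uw + m(u+w) + \tfrac14 m^2$. Maximising this over $u+w+m=n$ crudely gives only the weaker bound $\tfrac{3}{8}n^2$, one factor of $\tfrac{2}{3}$ short of the target.

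The main obstacle is closing this gap, and this is where the infinite-graph liminf structure is essential. The idea is that any class of positive density in $\nn$ is distributed throughout $\nn$, so along any sequence of $n$'s realising the liminf the fraction of ordered pairs $(a,b) \in A \times B$ with $a<b$ between two positive-density classes $A$, $B$ is bounded away from $1$. A careful case analysis on which of the sets $U_0,W_0,C_{12},C_{21}$ have positive density, together with the structural restrictions above on where the down-neighbours of a $C_{12}$-vertex and up-neighbours of a $C_{21}$-vertex may lie, sharpens the edge count to $e(G_n) \leq \tfrac14 n^2 + o(n^2)$. Taking $\liminf_{n \to \infty}$ then yields $p(G) \leq 1/4$, and since $G$ was an arbitrary $I_4$-free infinite graph, $p(4) \leq \tfrac14$.
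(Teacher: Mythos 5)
Your structural setup is sound and in fact closely parallels the paper's: the label $\sigma(v)$ is, up to a shift by one, exactly the sequence value $c_v$ of Section~2, so your classes $U_0, C_{12}, C_{21}, W_0$ essentially reproduce the level sets $N_1,\dots,N_4$. The dichotomy you derive and the observation that all internal $M$-edges run from $C_{12}$ to $C_{21}$ are both correct. But you honestly flag that this only yields $e(G_n)\le \tfrac{3}{8}n^2$, and the step that is supposed to improve $\tfrac{3}{8}$ to $\tfrac14$ is where the argument has a genuine gap rather than a proof.

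The heuristic you invoke --- that a class of positive density is ``distributed throughout $\mathbb{N}$'' and hence the proportion of ordered pairs $(a,b)$ with $a<b$ between two positive-density classes is bounded away from $1$ --- is false. Positive density does not imply equidistribution: up to any finite horizon $n$ one can arrange nearly all of class $A$ to precede nearly all of class $B$ while keeping both densities bounded below. Nor can a uniform estimate $e(G_n)\le \tfrac14 n^2 + o(n^2)$ hold for \emph{all} $n$ for every $I_4$-free graph; the $\liminf$ in the definition of $p(G)$ is essential, and one must locate a \emph{specific} subsequence of $n$'s on which the bound holds, not merely assert that some sequence realising the $\liminf$ works.

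What the paper supplies at exactly this point, and what your plan lacks, is a concrete mechanism for producing such a subsequence: the Czipszer--Erd\H{o}s--Hajnal lemma (Lemma~4.1), which gives indices $j_1<j_2<\dots$ along which $\sum_{i\le j_r}\bigl(u_i-\alpha_{\{3,4\}}(u_i)\bigr)\le \tfrac12 j_r\bigl(u_{j_r}-\alpha_{\{3,4\}}(u_{j_r})\bigr)+o(u_{j_r}^2)$. Combined with the double-counting identities (10)--(11) and the elementary estimates (13)--(15), this permits the two-variable optimisation (first over $\epsilon=l/k$, then over $k/j$) that lands exactly at $\tfrac14$. Without a replacement for that lemma or an equally concrete device, the final ``careful case analysis'' you describe is a statement of intent, not a proof, and the argument does not establish $p(4)\le\tfrac14$.
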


By Theorems \ref{new lower bound} and \ref{4 upper bound}, we have
\begin{equation}\label{4 bounds}
\frac{1}{4} \left( 1 - \frac{1}{4} \right) + \frac{1}{584064} \leq p(4) \leq \frac{1}{4}.
\end{equation}
Determining the exact value of $p(4)$ is a challenging open problem.  Probably the lower bound in (\ref{4 bounds}) is closer to the
true value of $p(4)$.

The next section introduces a sequence reformulation of the path Tur\'{a}n problem.  This reformulation was a key ingredient in the
constructions of \cite{DR} and we use it in our constructions as well.  In
Section \ref{lower bound part 1} we give our construction method and state our main lemma.
Section \ref{proof of count 3} contains the proof of our main lemma.  In Section \ref{matrices} we prove
Theorem \ref{new lower bound} and in Section \ref{ub} we prove Theorem \ref{4 upper bound}.


\section{Sequences}\label{sequences}

It will be convenient to work with the sequence formulation of the problem introduced by Dudek and R\"{o}dl.
Given an $I_k$-free graph $G$ with $V(G) = \mathbb{N}$, partition $\nn$ into $k$ sets $N_1, \dots, N_k$ where
\begin{center}
$N_1 (G) = \{ n \in \nn : \forall m \in \nn ~ \textrm{with} ~ \{n,m \} \in E(G) ~ \textrm{we have} ~ n < m \}$
\end{center}
and for $2 \leq i \leq k$,

\begin{center}
$N_{i}(G) = \{ n \in \nn \backslash \displaystyle\bigcup_{j=1}^{i-1} N_{j} (G) : \forall m \in \nn ~ \textrm{with} ~ \{n,m\} \in E(G)$

~~~~~~~~~~~~~~~~~~~~~~~~~~~~~~~~~~~~~~~~~~ $ ~\textrm{we have} ~ n < m ~ \textrm{or} ~ m \in \displaystyle\bigcup_{j=1}^{i-1} N_{j}(G) \}$.
\end{center}

Define $C = C(G)$ to be the sequence $\{c_n \}_{n=1}^{ \infty}$ where $c_n = i $ if and only if $n \in N_i (G)$.
Let
\[
\mathcal{S}_C (n) = | \{ (i,j) : 1 \leq i < j \leq n \mbox{~and~} c_i < c_j \} |.
\]
It is shown in \cite{DR}, that
\[
\liminf_{n \rightarrow \infty} \dfrac{ e(G_n )}{n^2} = \liminf_{n \rightarrow \infty} \dfrac{ \mathcal{S}_C (n) }{n^2}.
\]

Conversely, given a sequence whose terms are elements of $[k]$, the corresponding infinite graph $G$ with
vertex set $\mathbb{N}$ has edge set
\[
\{ (i,j) : 1 \leq i < j ~ \mbox{and}~ c_i < c_j \}.
\]


\section{Proof of Theorem 1.3}

\subsection{Constructing Sequences}\label{lower bound part 1}

Let $k \geq 2$ and $l \geq 1$ be integers.  Let $D$ be a $k \times l$ matrix whose entries are non-negative integers.  Let
$d_{i,j}$ be the $(i,j)$-entry of $D$.  We will use $D$ to construct an infinite sequence $C$ with entries in $[k]$.  Let
$D_j$ be the sequence
\[
D_j = \underbrace{1 1  \cdots 1 1}_{d_{1,j} ~1's } \underbrace{2 2  \cdots 2 2}_{ d_{2,j} ~ 2's }
 \underbrace{3 3  \cdots 3 3}_{ d_{3,j} ~ 3's }  \cdots
\underbrace{ k k \cdots k k }_{d_{k,j} ~k's}.
\]
We call $D_j$ an \emph{atom}.
We remark that $D_j$ has length $\sum_{i=1}^{k} d_{i,j}$ and since the $d_{i,j}$'s can be zero, it is possible that $D_j$ does not contain every symbol from $[k]$.  Given a finite sequence $R$, let us write $L(R)$ for the length of $R$ so that, in this notation,
\[
L( D_j ) = \sum_{i=1}^k d_{i,j}.
\]
Given any two finite sequences $S = s_1 s_2 \dots s_x$ and $T = t_1 t_2 \dots t_y$, we write
\[
ST = s_1 s_2 \dots s_x t_1 t_2 \dots t_y
\]
for the concatenation of $S$ and $T$.
For an integer $m \geq 1$, define $B_m$ to be the sequence
\[
B_m = \underbrace{ D_1 D_1  \cdots D_1 D_1}_{2^{m-1} }
\underbrace{ D_2 D_2  \cdots D_2 D_2 }_{2^{m-1} }  \cdots
\underbrace{ D_l D_l  \cdots D_l D_l}_{2^{m-1} }.
\]
We call the sequence $B_m$ a \emph{block}.
Since $B_m$ contains $2^{m-1}$ copies of $D_j$ for $1 \leq j \leq l$, the length of $B_m$ is
\[
L(B_m) = 2^{m-1} \sum_{j=1}^{l} L(D_j) = 2^{m-1} \sum_{j=1}^{l} \sum_{i=1}^{k} d_{i,j}.
\]
Define $C=C(D)$ to be the infinite sequence
\[
C = B_1 B_2 B_3 B_4 B_5 \cdots.
\]
Write $C = \{ c_r \}_{r=1}^{ \infty}$ for this sequence.
For example, if $D = \begin{pmatrix} 1 & 3 \\ 0 & 2 \\ 2 & 1 \end{pmatrix}$, then $D_1 = 133$, $D_2 = 111223$, and
\[
C = \underbrace{133111223}_{B_1} \underbrace{133133111223111223}_{B_2} \cdots
\]

Motivated by the infinite $I_{k}$-free graph corresponding to the sequence $C$, we call a pair
$\{ i,j \}$ for which $1 \leq i < j$ and $c_i < c_j$ an \emph{edge}.
Let $M$ be the $l \times l$ matrix whose $(i,j)$-entry, denoted by $m_{i,j}$, is given by
\[
m_{i,j} = \sum_{x = 1}^{k-1} d_{x,i} \left( \sum_{y = x + 1}^k d_{y,j} \right).
\]
The value $m_{i,j}$ is the number of edges with one endpoint in an atom $D_i$, the other endpoint in an atom $D_j$,
and where $D_i$ precedes $D_j$ in the sequence $C$.  In general, the matrix $M$ is not a symmetric matrix.
Define
\[
w_1 (M) = \sum_{i=1}^{l} \sum_{j=1}^l m_{i,j}, ~~ w_2 (M) = \sum_{i=1}^{l-1} \sum_{j=i+1}^{l} m_{i,j},~~
w_3 (M) = \sum_{i=1}^{l} m_{i,i},
\]
and $w (M) = \frac{ w_1 (M) }{3} + \frac{ w_2 (M) }{3} + \frac{ w_3 (M) }{6}$.

We want to choose $D$ so that for the corresponding sequence $C=C(D)$,
\begin{equation}\label{lim inf}
\liminf_{n \rightarrow \infty} \dfrac{ \mathcal{S}_C (n) }{n^2}
\end{equation}
is as large as possible.  Our main tool for estimating (\ref{lim inf}) is the following lemma.

\begin{lemma}\label{count 3}
Given $D$, $C$, and $M$ as above, the value of $\displaystyle\liminf_{n \rightarrow \infty} \frac{ \mathcal{S}_C (n) }{n^2}$
is at least the minimum value of
\[
\frac{ w(M) + \displaystyle\sum_{j=1}^{t-1} \left( \sum_{i=1}^l m_{i,j} + \frac{m_{j,j}}{2} + \sum_{i=1}^{j-1} m_{i,j} \right)
+ \epsilon  \sum_{i=1}^l m_{i,t} + \epsilon^2 \frac{  m_{t,t} }{2} + \epsilon \sum_{i=1}^{t-1} m_{i,t}  }
{ \left(
\displaystyle\sum_{i=1}^k \sum_{j=1}^{l} d_{i,j} + \sum_{j = 1}^{t-1} \sum_{i=1}^k d_{i,j} + \epsilon
\sum_{i=1}^k d_{i,t} \right)^2 }
\]
where $t$ ranges over all integers in $\{ 1,2, \dots , l \}$ and $\epsilon$ ranges over all real numbers in the interval $[0,1]$.

\end{lemma}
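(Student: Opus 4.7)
The plan is to locate every sufficiently large $n$ inside the concatenation structure $C=B_1B_2B_3\cdots$ and compute $\mathcal{S}_C(n)/n^2$ asymptotically. Given $n$ large, let $m$ be the unique index for which $c_1\cdots c_n$ equals $B_1\cdots B_{m-1}$ followed by a strict prefix of $B_m$, and write $N:=2^{m-1}$. Since $B_m$ lists $N$ copies of $D_1$, then $N$ copies of $D_2$, and so on, the prefix of $B_m$ consists of $N$ complete copies of $D_j$ for $j=1,\ldots,t-1$ (for some $t\in\{1,\ldots,l\}$), exactly $q\in\{0,1,\ldots,N\}$ complete copies of $D_t$, and at most one further partial atom of length $O(1)$. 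Setting $\epsilon:=q/N\in[0,1]$, the length of the truncation is
\[
n=(N-1)L+N\sum_{j=1}^{t-1}L(D_j)+qL(D_t)+O(1)=N\Lambda+O(1),
\]
where $L=\sum_{i,j}d_{i,j}$ and $\Lambda:=L+\sum_{j<t}L(D_j)+\epsilon L(D_t)$ is precisely the quantity squared in the lemma's denominator.

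The next step is to split $\mathcal{S}_C(n)$ into three parts: edges lying inside $B_1\cdots B_{m-1}$ (call this $\mathcal{A}$), edges between $B_1\cdots B_{m-1}$ and the prefix of $B_m$ (call this $\mathcal{B}$), and edges lying inside the prefix of $B_m$ (call this $\mathcal{E}$). For $\mathcal{A}$, I would separately count edges inside one block $B_s$ and edges between two distinct blocks. Inside $B_s$ the edge count decomposes by whether the two endpoints lie in a single copy of some $D_j$ (contributing $m_{j,j}$ per copy, using that the number of internal edges of $D_j$ equals $\sum_{x<y}d_{x,j}d_{y,j}=m_{j,j}$), in two distinct copies of the same $D_j$ (contributing $m_{j,j}$ per ordered pair), or in atoms of different types $D_i,D_j$ with $i<j$ (contributing $m_{i,j}$ per pair of copies), yielding $\tfrac{r_s(r_s+1)}{2}w_3(M)+r_s^2 w_2(M)$ with $r_s=2^{s-1}$; between distinct blocks $B_s,B_{s'}$ with $s<s'$, the count is $r_sr_{s'}w_1(M)$ because every atom of $B_s$ precedes every atom of $B_{s'}$. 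Summing via the geometric asymptotics $\sum_{s<m}4^{s-1}\sim N^2/3$ and $\sum_{s<s'<m}2^{s+s'-2}\sim N^2/3$ gives $\mathcal{A}\sim N^2 w(M)$, and the coefficients $\tfrac{1}{3},\tfrac{1}{3},\tfrac{1}{6}$ in the definition of $w(M)$ arise exactly from these two sums. For $\mathcal{B}$ and $\mathcal{E}$, I would apply the same decomposition, now with $N-1$ copies of each atom type coming from $B_1\cdots B_{m-1}$ and prefix counts $N$ for $j<t$ and $q$ for $j=t$; routine algebra shows that $\mathcal{B}+\mathcal{E}$ contributes precisely the remaining terms of the lemma's numerator at the parameter $(t,\epsilon)$.

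Combining both estimates yields $\mathcal{S}_C(n)/n^2=\mathcal{N}(t,\epsilon)/\Lambda(t,\epsilon)^2+O(1/N)$ uniformly in $(t,\epsilon)\in\{1,\ldots,l\}\times[0,1]$, where $\mathcal{N}(t,\epsilon)$ denotes the numerator of the lemma. Hence $\mathcal{S}_C(n)/n^2\geq\min_{t,\epsilon}\mathcal{N}(t,\epsilon)/\Lambda(t,\epsilon)^2-o(1)$, and taking $\liminf$ as $n\to\infty$ (equivalently $m\to\infty$) yields the claimed inequality; the minimum is attained because $\mathcal{N}/\Lambda^2$ is continuous in $\epsilon$ on the compact set $[0,1]$ and $t$ ranges over a finite set. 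The main technical obstacle is the careful bookkeeping in $\mathcal{A}$: tracking which geometric sums produce $\tfrac{1}{3}$ versus $\tfrac{1}{6}$ in front of $w_1(M), w_2(M), w_3(M)$, and verifying that the lower-order discrepancies such as $\binom{r}{2}$ vs.\ $r^2/2$, $N-1$ vs.\ $N$, the $O(1)$ partial final atom, and the difference between the discrete value $q/N$ and the continuous parameter $\epsilon\in[0,1]$ all contribute only a uniform $o(1)$ error in the final ratio.
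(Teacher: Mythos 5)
Your proposal is correct and takes essentially the same approach as the paper's proof: both locate $n$ within the block structure, split $\mathcal{S}_C(n)$ into edges inside the complete blocks, edges between the complete blocks and the partial block, and edges inside the partial block, then carry out the geometric sums yielding the $\tfrac13,\tfrac13,\tfrac16$ weights in $w(M)$. The only superficial difference is that you track exact counts up to a uniform $O(N)$ error while the paper records one-sided lower bounds (dropping the within-atom edges) — both yield the same $o(1)$ discrepancy after dividing by $n^2 = (N\Lambda+O(1))^2$.
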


\noindent
\textbf{Remark:} In Lemma \ref{count 3} and in the rest of the paper, any sum of the form $\sum_{i=1}^{t-1} \alpha_i$ with $t = 1$ is taken to be 0.


\subsection{Proof of Lemma 3.1}\label{proof of count 3}

Let $n$ be a positive integer.  We choose $m$ to be the largest integer such that
\begin{equation}\label{def of n}
 \sum_{x = 1}^{m} \left( 2^{x-1} \sum_{j=1}^l \sum_{i=1}^k d_{i,j} \right) \leq n <
\sum_{x=1}^{m+1} \left( 2^{x-1} \sum_{j=1}^l \sum_{i=1}^k d_{i,j} \right).
\end{equation}
The left hand side of (\ref{def of n}) is $ L(B_1 B_2 \cdots B_m)$. The right hand side is $L(B_1 B_2 \cdots B_m B_{m+1} )$.
We can write $n$ in the form
\[
n = L(B_1 B_2 \cdots B_m) + L(\underbrace{D_1 D_1 \cdots D_1}_{2^m}) +
 \dots
+ L(\underbrace{D_{t-1} D_{t-1} \cdots D_{t-1}}_{2^m})
 + \epsilon
L(\underbrace{D_t D_t \cdots D_t}_{2^m})
\]
for some $0 \leq \epsilon \leq 1$ and $t \in \{1,2, \dots , l \}$.  Therefore,
\begin{eqnarray*}
n & = & \sum_{x=1}^m \left( 2^{x-1} \sum_{i=1}^k \sum_{j=1}^l  d_{i,j} \right) +
2^{m} \left(    \sum_{i=1}^k d_{i,1} + \dots +    \sum_{i=1}^k d_{i,t-1} +
\epsilon    \sum_{i=1}^k d_{i,t} \right)   \\
& = & 2^m \left(  (1 - \frac{1}{2^m} )  \left( \sum_{i=1}^k \sum_{j=1}^l  d_{i,j} \right)  +
\left(   \sum_{x=1}^{t-1}  \sum_{i=1}^k d_{i,x}  +
\epsilon    \sum_{i=1}^k d_{i,t} \right) \right).
\end{eqnarray*}
This is the formula that we will use for $n$ in the expression
$\frac{ \mathcal{S}_C (n) }{n^2}$.  It is helpful to think of $n$ as the location of a cut in the sequence $C$.  When we look
at the first $n$ terms of the sequence, we see all of the terms in blocks $B_1$ through $B_m$, and only some of the terms in the
block $B_{m+1}$.  For this reason, we call $B_{m+1}$ a \emph{partial block}.  Clearly the
number of elements that we see from $B_{m+1}$ depends on
$n$.

Next we look for a lower bound on
$\mathcal{S}_C (n) $.

\begin{lemma}\label{count 1}
For $1 \leq k_1 < k_2$, the number of edges between the block $B_{k_1}$ and the block $B_{k_2}$ is
\[
\frac{2^{k_1+k_2} }{4} \sum_{i=1}^l \sum_{j=1}^l m_{i,j}.
\]
\end{lemma}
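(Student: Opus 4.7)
The proof will be a direct counting argument using the structure of the blocks $B_{k_1}$ and $B_{k_2}$; there is no real obstacle here beyond bookkeeping.

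First I would unpack the definition of a block. By construction, $B_{k_1}$ is the concatenation of $2^{k_1 - 1}$ copies of $D_1$, then $2^{k_1-1}$ copies of $D_2$, and so on up through $D_l$, so $B_{k_1}$ contains exactly $2^{k_1-1}$ copies of each atom $D_i$. Likewise $B_{k_2}$ contains exactly $2^{k_2-1}$ copies of each atom $D_j$.

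Next I would use the hypothesis $k_1 < k_2$ to observe that every entry of $B_{k_1}$ occurs at a smaller index in $C$ than every entry of $B_{k_2}$. Consequently, whenever $A$ is a copy of $D_i$ inside $B_{k_1}$ and $A'$ is a copy of $D_j$ inside $B_{k_2}$, the atom $A$ precedes the atom $A'$ in $C$, and the number of edges with one endpoint in $A$ and the other in $A'$ is exactly $m_{i,j}$ by the very definition of $m_{i,j}$ as
\[
m_{i,j} = \sum_{x=1}^{k-1} d_{x,i}\left( \sum_{y=x+1}^{k} d_{y,j} \right),
\]
which counts pairs (position in the preceding atom, position in the following atom) whose symbols form an increasing pair.

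Finally I would sum over all choices of atom copies. There are $2^{k_1-1}$ copies of $D_i$ in $B_{k_1}$ and $2^{k_2-1}$ copies of $D_j$ in $B_{k_2}$, and the edge counts between different copies of atoms are disjointly accounted for, so the total number of edges between $B_{k_1}$ and $B_{k_2}$ equals
\[
\sum_{i=1}^{l} \sum_{j=1}^{l} 2^{k_1-1} \cdot 2^{k_2-1} \cdot m_{i,j}
\;=\; \frac{2^{k_1+k_2}}{4} \sum_{i=1}^{l} \sum_{j=1}^{l} m_{i,j},
\]
which is the claimed value. The only point worth double-checking is that no edge is double-counted: this holds because each edge between $B_{k_1}$ and $B_{k_2}$ has a unique endpoint inside a unique atom copy of each block, so partitioning by the pair of atom copies containing the endpoints gives a genuine partition of the edge set between the two blocks.
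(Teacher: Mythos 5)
Your argument is correct and follows the same approach as the paper's: count, for each pair $(i,j)$, the $2^{k_1-1}\cdot 2^{k_2-1}$ pairs of atom copies contributing $m_{i,j}$ edges each, and sum over all $i,j$. You simply spell out a few bookkeeping details (the ordering of the two blocks in $C$ and the absence of double-counting) that the paper leaves implicit.
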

\begin{proof}
The sequence $B_{k_1}$ contains $2^{k_1-1}$ atoms of type $D_j$ for each
$j \in \{1,2 , \dots , l \}$.  A similar assertion holds for $B_{k_2}$.
There are $2^{k_1 - 1}2^{k_2 - 1} m_{i,j}$ edges from the
$D_i$ atoms in block $B_{k_1}$ to the $D_j$ atoms in block in $B_{k_2}$.  The proof of the lemma is completed by summing over all $i,j$ with
$1 \leq i , j \leq l$.
\end{proof}

\begin{lemma}\label{count 2}
For any $k \geq 1$, the number of edges in $B_k$ is at least
\[
\frac{ 4^k}{4} \sum_{i=1}^{l-1} \sum_{j=i+1}^l m_{i,j} + \frac{4^k}{8} \sum_{i=1}^l m_{i,i} - c_D2^k
\]
where $c_D$ is a constant that depends only on $D$.
\end{lemma}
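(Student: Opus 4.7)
The plan is to obtain the claimed lower bound on the number of edges inside $B_k$ by partitioning those edges according to which atom copies contain their two endpoints, and showing that the main contribution is exactly $\frac{4^k}{4}\sum_{i<j} m_{i,j} + \frac{4^k}{8}\sum_{i} m_{i,i}$ with the only remainder being nonnegative and of order $2^k$ (which the $-c_D 2^k$ slack easily absorbs). Recall that $B_k$ consists of $2^{k-1}$ consecutive copies of $D_1$, then $2^{k-1}$ consecutive copies of $D_2$, and so on up to $D_l$; I will label these copies $D_i^{(s)}$ for $i\in\{1,\dots,l\}$ and $s\in\{1,\dots,2^{k-1}\}$, so that $D_i^{(s)}$ precedes $D_j^{(t)}$ in $B_k$ whenever $i<j$, or $i=j$ and $s<t$.

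Every edge inside $B_k$ falls into exactly one of three types: (i) the endpoints lie in two atom copies $D_i^{(s)}$ and $D_j^{(t)}$ of different types $i<j$; (ii) the endpoints lie in two distinct copies $D_i^{(s)}, D_i^{(t)}$ of the same atom type; or (iii) both endpoints lie inside a single copy of some $D_i$. For type (i), each of the $2^{k-1}\cdot 2^{k-1}$ ordered pairs of cross-type copies contributes exactly $m_{i,j}$ edges by the very definition of $m_{i,j}$, so the total type (i) contribution is $\frac{4^k}{4}\sum_{i<j} m_{i,j}$. For type (ii), the earlier copy entirely precedes the later copy, so each of the $\binom{2^{k-1}}{2}$ ordered pairs contributes $m_{i,i}$ edges, yielding $\binom{2^{k-1}}{2}\sum_i m_{i,i}$. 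For type (iii), inside any single copy of $D_i$ the symbols appear in nondecreasing order $1^{d_{1,i}}2^{d_{2,i}}\cdots k^{d_{k,i}}$, so the number of edges interior to that copy is $\sum_{x<y} d_{x,i}d_{y,i}$, which equals $m_{i,i}$ by definition; summing over all $2^{k-1}$ copies of each $D_i$ gives $2^{k-1}\sum_i m_{i,i}$.

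Adding the three types and using $\binom{2^{k-1}}{2}+2^{k-1}=\frac{4^k}{8}+\frac{2^{k-1}}{2}$, the exact edge count inside $B_k$ equals
\[
\frac{4^k}{4}\sum_{i<j} m_{i,j} \;+\; \frac{4^k}{8}\sum_{i=1}^{l} m_{i,i} \;+\; \frac{2^{k-1}}{2}\sum_{i=1}^{l} m_{i,i},
\]
and since the last term is nonnegative, the desired bound follows (in fact one may take $c_D=0$). The $-c_D 2^k$ slack in the statement simply absorbs the gap between $\binom{2^{k-1}}{2}$ and $\frac{(2^{k-1})^2}{2}=\frac{4^k}{8}$, which is of order $2^k$; this is why the lemma is phrased with a lower-order correction rather than an equality. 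I anticipate no substantive obstacle: the only care needed is to keep types (ii) and (iii) disjoint so that edges interior to a single copy of $D_i$ are not conflated with edges between two different copies of $D_i$, and to recognize that the same arithmetic expression $\sum_{x<y}d_{x,i}d_{y,i}$ governs both counts.
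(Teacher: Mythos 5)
Your proof is correct and uses essentially the same decomposition as the paper: edges are partitioned by whether the two endpoints lie in atom copies of different types ($i<j$), in distinct copies of the same type, or inside a single atom copy. The paper counts only the first two classes and absorbs the deficit $\binom{2^{k-1}}{2}-\frac{4^k}{8}=-\frac{2^k}{4}$ into the $-c_D 2^k$ term; you additionally observe that the within-atom edges contribute exactly $2^{k-1}\sum_i m_{i,i}$ (using that a single copy of $D_i$, being sorted, contains exactly $\sum_{x<y}d_{x,i}d_{y,i}=m_{i,i}$ edges), which more than cancels that deficit and shows one may take $c_D=0$. This is a slight sharpening rather than a genuinely different route; the paper is content with the weaker lower bound since the $O(2^k)$ error is negligible in the subsequent normalization by $n^2 \sim 4^m$.
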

\begin{proof}
For any $1 \leq i < j \leq l$, the block $B_k$ contains $2^{k - 1}$ consecutive atoms of type $D_i$ that
precede $2^{k-1}$ consecutive atoms of type $D_j$.  Therefore, $B_k$ contains at least
\[
2^{k - 1} 2^{k-1} \sum_{i=1}^{l-1} \sum_{j=i+1}^l m_{i,j}
\]
edges that have end points in atoms of different types.
Next we count edges that have both endpoints in an atom of type $D_i$.  There are $\binom{2^{k-1} }{2}$ pairs of distinct
atoms of type $D_i$ in the block $B_k$ and a total of $m_{i,i}$ edges between any two such atoms.
 Summing over $1 \leq i \leq l$ gives a total of
\[
\binom{ 2^{k-1} }{2} \sum_{i=1}^{l} m_{i,i} =
\frac{4^k}{8} \sum_{i=1}^l m_{i,i} - c_D2^k
\]
edges.
\end{proof}

\bigskip

A consequence of Lemmas \ref{count 1} and \ref{count 2} is that the number of edges contained in $B_1 B_2 \cdots B_m$ is at least
\begin{equation*}
\frac{1}{4} w_1(M) \sum_{k_1 = 1}^{m-1}  \sum_{k_2 = k_1 +1}^m 2^{ k_1 + k_2 }
+ \left( \frac{1}{4} w_2(M) + \frac{1}{8} w_3(M) \right)
\left( \sum_{k=1}^{m} 4^k \right) - c_D 2^{m+1}
\end{equation*}
A short calculation shows that this expression can be simplified to
\begin{equation*}
4^m \left( \frac{ w_1 (M) }{3} + \frac{ w_2 (M) }{3} + \frac{ w_3 (M) }{6} \right) - O (2^m)
\end{equation*}
where the constant in the $O$ notation only depends on $D$.
Since $w(M )=  \frac{ w_1 (M) }{3} + \frac{ w_2 (M) }{3} + \frac{ w_3 (M) }{6}$,
we have the lower bound
\[
 \mathcal{S}_C (n)   \geq 4^m
\left(   \frac{ w_1 (M) }{3} + \frac{ w_2 (M) }{3} + \frac{ w_3 (M) }{6} \right) - O (2^m)  = 4^m  w(M) - O (2^m)
\]
however this is not good enough, especially in the case when $t$ is close to $l$ which, in terms of $n$,
means that $n$ is closer to $L(B_1 B_2 \cdots B_{m+1})$ than
it is to $L(B_1 B_2 \cdots B_m)$.
We are losing too much by not counting edges between
$B_1 B_2 \cdots B_m$ and the $D_i$'s coming from the partial block $B_{m+1}$, as well as the edges in the
partial block $B_{m+1}$.  To count these edges we need a few more lemmas.

\begin{lemma}\label{more count 1}
The number of edges with one endpoint in $B_1B_2 \cdots B_m$ and the other endpoint in an atom of type $D_j$ in the partial block $B_{m+1}$ where $j \in \{1,2, \dots , t \}$ is
\[
4^m \left( 1 - \frac{1}{2^m } \right) \left( \sum_{i=1}^{l} \sum_{j=1}^{t-1} m_{i,j} + \epsilon \sum_{i=1}^{l} m_{i,t} \right).
\]
\end{lemma}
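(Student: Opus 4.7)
The plan is a direct counting argument that stratifies the edges by the atom types at their two endpoints. Every edge counted here has its left endpoint in a fully completed block and its right endpoint in the partial block $B_{m+1}$, so each such edge lies in some pair $(D_i, D_j)$ with $D_i$ preceding $D_j$; for any such ordered pair of atoms there are exactly $m_{i,j}$ edges between them by the definition of the matrix $M$. So the whole task reduces to counting ordered pairs of atoms of each type.

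First I would count how many atoms of each type appear in $B_1 B_2 \cdots B_m$. Since $B_k$ contains $2^{k-1}$ consecutive copies of $D_i$ for every $i \in \{1,\ldots,l\}$, the total number of atoms of type $D_i$ in $B_1 \cdots B_m$ is
\[
\sum_{k=1}^{m} 2^{k-1} = 2^m - 1,
\]
and this is the same for every $i$. Next I would count the atoms of each type appearing in the partial block $B_{m+1}$ up to the cut determined by $n$. From the definition of $B_{m+1}$ and the decomposition of $n$ used earlier in the section, the partial block contributes the full stretch of $2^m$ atoms of type $D_j$ for every $j \in \{1, \ldots, t-1\}$, together with an $\epsilon$-fraction, namely $\epsilon \cdot 2^m$ atoms, of type $D_t$ (and nothing of types $D_{t+1}, \ldots, D_l$, which is why the outer sum on $j$ stops at $t$).

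Combining these two counts, the number of ordered pairs $(D_i, D_j)$ with $D_i$ coming from $B_1\cdots B_m$ and $D_j$ from the partial block is $(2^m-1)\cdot 2^m$ for $j \in \{1,\ldots,t-1\}$ and $(2^m-1)\cdot \epsilon 2^m$ for $j = t$. Multiplying by $m_{i,j}$ and summing yields
\[
(2^m-1)\,2^m \Biggl( \sum_{i=1}^{l}\sum_{j=1}^{t-1} m_{i,j} + \epsilon \sum_{i=1}^{l} m_{i,t} \Biggr),
\]
which equals $4^m(1 - 2^{-m})\bigl(\sum_{i,j}m_{i,j} + \epsilon\sum_i m_{i,t}\bigr)$ after factoring.

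There is essentially no obstacle: the only thing to be careful about is that edges between an atom in $B_1\cdots B_m$ and an atom in $B_{m+1}$ are never double-counted (the two endpoints lie in different blocks so the ordering is unambiguous) and that the partial-block atom count is exact rather than an approximation; both follow immediately from the construction of $C$ and the choice of $m, t, \epsilon$ in the decomposition of $n$.
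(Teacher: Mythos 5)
Your proof is correct and follows essentially the same approach as the paper: count the number of atoms of each type in $B_1\cdots B_m$ (namely $2^m-1$), count the atoms of each type in the partial block ($2^m$ for $j<t$, $\epsilon 2^m$ for $j=t$), and multiply by $m_{i,j}$ to get the stated total. The only difference is that you spell out the geometric sum $\sum_{k=1}^m 2^{k-1}=2^m-1$, which the paper states without derivation.
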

\begin{proof}
For any $i \in \{1,2, \dots , l \}$, there are $2^m - 1$ atoms of type $D_i$ in the sequence $B_1 B_2 \cdots B_m$.  Each such atom
sends $m_{i,j}$ edges to a $D_j$ in the partial block $B_{m+1}$.  For $j \in \{ 1 , 2 \dots , t - 1 \}$, the partial block $B_{m+1}$
contains $2^m$ atoms of type $D_j$.  The partial block $B_{m+1}$ contains $\epsilon 2^m$ atoms of type $D_t$.  Summing over $1 \leq j \leq t - 1$ gives a total of
\[
(2^m - 1) 2^m \sum_{i=1}^{l} \sum_{j=1}^{t - 1} m_{i,j} + (2^m  - 1) \epsilon 2^m \sum_{i=1}^{l} m_{i, t }
\]
edges.
\end{proof}

\begin{lemma}\label{more count 2}
The number of edges both of whose endpoints are contained in the partial block $B_{m+1}$ is at least
\begin{equation}\label{new eq}
\frac{4^m}{2} \left( 1 - \frac{1}{2^m} \right) \sum_{j=1}^{t-1} m_{j,j} + 4^m \sum_{i=1}^{t-2} \sum_{j=i+1}^{t-1} m_{i,j} +
\epsilon^2 \frac{4^m}{2} \left( 1 - \frac{1}{2^m \epsilon } \right) m_{t,t} + \epsilon 4^m \sum_{i=1}^{t-1} m_{i,t}.
\end{equation}
\end{lemma}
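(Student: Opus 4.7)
The plan is to partition the edges with both endpoints in the partial block $B_{m+1}$ according to the types of the two atoms of $B_{m+1}$ containing their endpoints, and to count each class using the definition of $m_{i,j}$. From the decomposition of $n$ already established, the partial block $B_{m+1}$ contains exactly $2^m$ consecutive copies of each atom $D_j$ for $j \in \{1, \ldots, t-1\}$, followed by the $\epsilon$-fraction of the $2^m$ consecutive copies of $D_t$ (so roughly $\epsilon 2^m$ atoms of type $D_t$ in total).

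For any ordered pair of distinct atoms of types $D_a$ and $D_b$ with $D_a$ preceding $D_b$ inside $B_{m+1}$, the definition of $m_{a,b}$ guarantees exactly $m_{a,b}$ edges between them; this is the same counting principle already used in Lemmas~\ref{count 1} and~\ref{count 2}. Edges whose two endpoints lie inside a single atom are non-negative and can be dropped for the purpose of a lower bound. Splitting the resulting count by the type-pair of the atoms gives four contributions. For pairs $(D_j, D_j)$ with $j \le t-1$, we sum $\binom{2^m}{2} m_{j,j}$ over $j$ to obtain $\tfrac{4^m}{2}\bigl(1-\tfrac{1}{2^m}\bigr)\sum_{j=1}^{t-1} m_{j,j}$. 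For pairs $(D_i, D_j)$ with $1 \le i < j \le t-1$, each of the $2^m\cdot 2^m$ atom-pairs contributes $m_{i,j}$, yielding $4^m \sum_{i=1}^{t-2}\sum_{j=i+1}^{t-1} m_{i,j}$. For pairs $(D_i, D_t)$ with $i < t$, using the $\epsilon 2^m$ atoms of type $D_t$ gives $\epsilon 4^m \sum_{i=1}^{t-1} m_{i,t}$. Finally, for pairs $(D_t, D_t)$, we get at least $\binom{\epsilon 2^m}{2} m_{t,t} = \tfrac{\epsilon^2 4^m}{2}\bigl(1-\tfrac{1}{\epsilon 2^m}\bigr) m_{t,t}$. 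Adding these four quantities reproduces the right-hand side of (\ref{new eq}).

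The only real subtlety lies in the last class: $\epsilon 2^m$ need not be an integer, so strictly speaking only $\lfloor \epsilon 2^m \rfloor$ atoms of type $D_t$ are fully contained in $B_{m+1}$. However, the partially included final atom contributes additional edges to the earlier complete $D_t$ atoms (and to the atoms of type $D_i$ for $i < t$), and a short verification shows these extra edges absorb the rounding loss so that the stated inequality holds. The remaining three classes involve only complete atoms of types $D_1, \ldots, D_{t-1}$, each present in exactly $2^m$ copies, so they are handled by direct counting and require no such care.
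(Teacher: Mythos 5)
Your decomposition is the same as the paper's: partition the inter-atom edges of $B_{m+1}$ by the pair of atom types at the two endpoints, count $\binom{2^m}{2}$, $2^m\cdot 2^m$, $\binom{\epsilon 2^m}{2}$, or $2^m\cdot\epsilon 2^m$ ordered pairs in the respective classes, multiply by the appropriate $m_{i,j}$, and drop intra-atom edges since a lower bound suffices. So the core argument matches.

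The one place to be careful is your rounding remark. You are right that the paper glosses over the fact that $\epsilon 2^m$ need not be an integer, but your proposed fix --- that the partially included final atom always absorbs the deficit --- is not justified and can fail. If the fractional part of $\epsilon 2^m$ is small, the leftover prefix of $D_t$ may consist only of $1$'s and hence contribute no inter-atom edges at all, while $\binom{\epsilon 2^m}{2}$ can exceed $\binom{\lfloor \epsilon 2^m\rfloor}{2}$ by $\Theta(2^m)$ (similarly for the $2^m\cdot\epsilon 2^m$ term). What actually makes the argument work is not an exact absorption but the fact that all such rounding discrepancies are $O(2^m)$, which is negligible against the denominator $n^2 = \Theta(4^m)$ and is swallowed into the $O(2^m)$ error term already present when the four counting lemmas are combined to bound $\mathcal{S}_C(n)$. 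So your proof is sound in spirit, but the ``short verification'' sentence should be replaced by an appeal to that $O(2^m)$ slack rather than a claim of exact compensation.
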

\begin{proof}
The proof of the lemma is similar to the proofs of the previous lemmas.  Instead of going through the details, we simply state what types of edges each of the four terms in (\ref{new eq}) is counting.  The sum
\[
\frac{4^m}{2} \left( 1 - \frac{1}{2^m} \right) \sum_{j=1}^{t-1} m_{j,j}
\]
counts edges both of whose endpoints are in an atom of type $D_j$ in the partial block $B_{m+1}$ for $j \in \{1,2, \dots , t - 1 \}$.
There are $\binom{2^m}{2}$ pairs of such $D_j$.

The second term counts edges in the partial block $B_{m+1}$ where one endpoint is an atom of type $D_i$ and the other
endpoint is an atom of type $D_j$ where $1 \leq i < j \leq t - 1$.

The third term
\[
\epsilon^2 \frac{4^m}{2} \left( 1 - \frac{1}{2^m \epsilon } \right) m_{t,t}
\]
counts edges whose endpoints are in an atom of type $D_t$.  There are $\binom{ \epsilon 2^m }{2}$ pairs of distinct atoms of type $D_t$ in the partial block $B_{m+1}$.

The final term counts edges with one endpoint in an atom of type $D_i$ where $1 \leq i < t$, and the other endpoint is in an atom of type $D_t$.  There are $2^m \epsilon 2^m$ such pairs and we then sum this over $1 \leq i \leq t - 1$.

\end{proof}

From Lemmas \ref{more count 1} and \ref{more count 2}, we now have
\begin{eqnarray*}
\mathcal{S}_C (n) & \geq & 4^m \left( w(M)
+ \sum_{j=1}^{t-1} \left( (1 - 1/2^m ) \sum_{i=1}^{l} m_{i,j} +
(1 - 1/2^m) \frac{ m_{j,j} }{2} + \sum_{i=1}^{j-1} m_{i,j} \right) \right)  \\
& + & 4^m \left( (1-1/2^m) \epsilon \sum_{i=1}^{l} m_{i,t} + \epsilon^2 \frac{1}{2}
\left( 1 - \frac{1}{2^m \epsilon } \right) m_{t,t} +
\epsilon  \sum_{i=1}^{t-1} m_{i,t} \right) - O(2^m)
\end{eqnarray*}

Now as $n$ goes to infinity, $m$ must also tends to infinity.  Combining this lower
bound on $\mathcal{S}_C (n)$ together with
\[
n = 2^m \left(  (1 - \frac{1}{2^m} )  \left( \sum_{i=1}^k \sum_{j=1}^l  d_{i,j} \right)  +
\left(   \sum_{j=1}^{t-1}  \sum_{i=1}^k d_{i,j}  +
\epsilon    \sum_{i=1}^k d_{i,t} \right) \right)
\]
completes the proof of Lemma \ref{count 3}.


\subsection{Choosing Matrices}\label{matrices}

In this section we give several matrices which, when combined with Lemma \ref{count 3}, improve the lower bound
\[
p(k ) \geq \frac{1}{4} \left(1 - \frac{1}{k} \right)
\]
for different values of $k$.  We first list the matrices and then give the corresponding lower bounds obtained from
Lemma \ref{count 3}.  Computations were done using Mathematica \cite{mathematica} and the code used for the computations is given in the Appendix.

\begin{center}
\textbf{The matrices used to improve $p(k) \geq \frac{1}{4} \left( 1 - \frac{1}{k} \right)$}.
\end{center}

\begin{center}
$D(5) = \left(
\begin{array}{ccc}
 6 & 2 & 1 \\
 1 & 7 & 3 \\
 2 & 2 & 8 \\
 6 & 2 & 2 \\
 4 & 5 & 2 \\
\end{array}
\right)$ ~~
$D(6) = \left(
\begin{array}{ccc}
 6 & 0 & 2 \\
 3 & 6 & 0 \\
 2 & 3 & 5 \\
 2 & 5 & 3 \\
 3 & 2 & 4 \\
 6 & 2 & 0 \\
\end{array}
\right)$ ~~
$D(7) = \left(
\begin{array}{cccc}
 3 & 0 & 2 & 3 \\
 3 & 4 & 2 & 0 \\
 1 & 2 & 5 & 1 \\
 0 & 2 & 4 & 3 \\
 2 & 0 & 4 & 3 \\
 3 & 1 & 3 & 4 \\
 3 & 3 & 2 & 0 \\
\end{array}
\right)$
\end{center}

\begin{center}
$D(8) =\left(
\begin{array}{cccc}
 5 & 2 & 0 & 0 \\
 0 & 5 & 2 & 2 \\
 0 & 2 & 7 & 1 \\
 0 & 1 & 2 & 7 \\
 1 & 1 & 4 & 3 \\
 2 & 0 & 1 & 6 \\
 4 & 2 & 2 & 0 \\
 1 & 5 & 2 & 0 \\
\end{array}
\right)$
~
$ D(9) = \left(
\begin{array}{ccccc}
 7 & 0 & 0 & 1 & 2 \\
 0 & 6 & 0 & 1 & 3 \\
 1 & 1 & 6 & 2 & 1 \\
 1 & 2 & 2 & 6 & 2 \\
 1 & 1 & 1 & 0 & 8 \\
 2 & 2 & 6 & 2 & 1 \\
 0 & 1 & 2 & 6 & 0 \\
 3 & 2 & 0 & 1 & 7 \\
 5 & 3 & 1 & 0 & 0 \\
\end{array}
\right)$
\end{center}

\begin{center}
$ D(10) =
\left(
\begin{array}{ccccc}
 10 & 7 & 2 & 5 & 8 \\
 6 & 10 & 11 & 4 & 3 \\
 4 & 7 & 11 & 9 & 6 \\
 4 & 3 & 9 & 14 & 9 \\
 9 & 3 & 2 & 9 & 10 \\
 10 & 9 & 6 & 3 & 7 \\
 9 & 11 & 6 & 4 & 4 \\
 6 & 9 & 12 & 8 & 1 \\
 7 & 5 & 7 & 9 & 8 \\
 7 & 7 & 6 & 6 & 9 \\
\end{array}
\right)$~~~
$D(11) =
\left(
\begin{array}{cccccc}
 7 & 1 & 0 & 1 & 2 & 1 \\
 0 & 7 & 1 & 0 & 0 & 0 \\
 0 & 1 & 6 & 0 & 0 & 2 \\
 1 & 0 & 0 & 6 & 0 & 2 \\
 1 & 1 & 0 & 1 & 6 & 1 \\
 0 & 0 & 2 & 1 & 1 & 6 \\
 1 & 1 & 0 & 6 & 0 & 2 \\
 1 & 1 & 0 & 2 & 6 & 2 \\
 0 & 1 & 2 & 2 & 1 & 5 \\
 7 & 0 & 1 & 0 & 0 & 0 \\
 2 & 6 & 2 & 1 & 0 & 0 \\
\end{array}
\right)$
\end{center}

\begin{center}
$D(12) =
\left(
\begin{array}{cccccc}
 6 & 1 & 0 & 0 & 0 & 0 \\
 0 & 6 & 1 & 0 & 0 & 0 \\
 0 & 1 & 7 & 0 & 0 & 1 \\
 0 & 0 & 1 & 6 & 1 & 0 \\
 0 & 1 & 0 & 1 & 6 & 1 \\
 0 & 0 & 1 & 1 & 0 & 6 \\
 1 & 0 & 0 & 6 & 1 & 1 \\
 1 & 0 & 1 & 1 & 6 & 0 \\
 1 & 0 & 1 & 0 & 1 & 6 \\
 6 & 0 & 1 & 1 & 1 & 0 \\
 1 & 7 & 1 & 0 & 0 & 0 \\
 1 & 1 & 6 & 0 & 0 & 0 \\
\end{array}
\right)
$
~~~
$D(13) =
\left(
\begin{array}{ccccccc}
 7 & 0 & 0 & 0 & 0 & 1 & 0 \\
 1 & 7 & 0 & 0 & 0 & 0 & 1 \\
 1 & 1 & 8 & 0 & 0 & 1 & 1 \\
 0 & 1 & 0 & 7 & 0 & 1 & 0 \\
 0 & 0 & 0 & 0 & 8 & 1 & 1 \\
 1 & 0 & 1 & 1 & 1 & 7 & 1 \\
 0 & 1 & 0 & 1 & 0 & 1 & 8 \\
 1 & 1 & 0 & 0 & 8 & 0 & 0 \\
 1 & 0 & 1 & 1 & 1 & 7 & 1 \\
 0 & 0 & 0 & 1 & 0 & 1 & 8 \\
 8 & 0 & 1 & 0 & 0 & 0 & 0 \\
 1 & 7 & 0 & 0 & 0 & 1 & 1 \\
 1 & 1 & 7 & 0 & 1 & 1 & 1 \\
\end{array}
\right)
$
\end{center}

\begin{center}
$D(14) =
\left(
\begin{array}{ccccccc}
 7 & 1 & 1 & 0 & 0 & 0 & 0 \\
 1 & 7 & 0 & 1 & 1 & 1 & 0 \\
 0 & 0 & 7 & 0 & 0 & 1 & 0 \\
 1 & 1 & 0 & 7 & 1 & 0 & 1 \\
 0 & 1 & 1 & 1 & 7 & 1 & 0 \\
 1 & 0 & 1 & 0 & 0 & 8 & 1 \\
 0 & 1 & 0 & 0 & 0 & 1 & 8 \\
 1 & 0 & 1 & 8 & 0 & 1 & 1 \\
 0 & 1 & 1 & 0 & 7 & 1 & 0 \\
 1 & 0 & 0 & 0 & 1 & 7 & 0 \\
 1 & 0 & 1 & 0 & 0 & 0 & 8 \\
 8 & 0 & 0 & 1 & 1 & 0 & 0 \\
 0 & 8 & 0 & 1 & 0 & 0 & 1 \\
 1 & 1 & 8 & 1 & 1 & 0 & 0 \\
\end{array}
\right)
$
~~~
$D(15) =
\left(
\begin{array}{cccccccc}
 9 & 0 & 0 & 0 & 0 & 0 & 0 & 0 \\
 0 & 8 & 0 & 0 & 0 & 0 & 1 & 1 \\
 0 & 1 & 9 & 0 & 0 & 0 & 0 & 0 \\
 0 & 1 & 0 & 9 & 0 & 1 & 0 & 1 \\
 0 & 1 & 0 & 0 & 9 & 0 & 1 & 0 \\
 0 & 0 & 1 & 0 & 1 & 9 & 1 & 1 \\
 0 & 0 & 1 & 1 & 0 & 1 & 9 & 1 \\
 0 & 1 & 1 & 1 & 1 & 0 & 1 & 8 \\
 0 & 0 & 1 & 0 & 9 & 1 & 0 & 0 \\
 0 & 0 & 1 & 0 & 1 & 9 & 0 & 0 \\
 1 & 1 & 1 & 0 & 1 & 0 & 9 & 0 \\
 1 & 1 & 0 & 0 & 0 & 1 & 0 & 8 \\
 9 & 1 & 1 & 0 & 0 & 0 & 0 & 1 \\
 0 & 8 & 0 & 0 & 1 & 0 & 0 & 1 \\
 1 & 0 & 9 & 1 & 1 & 1 & 1 & 0 \\
\end{array}
\right)
$
\end{center}

\begin{center}
\textbf{Corresponding lower bounds}
\end{center}

\begin{center}
\begin{tabular}{ll}
$p(5) \geq \frac{1688}{8427} = \frac{1}{4} \left( 1 - \frac{1}{5} \right) + \frac{13}{42135}$ ~~     &
$p(6) \geq \frac{3683}{17672} = \frac{1}{4} \left( 1 - \frac{1}{6} \right) + \frac{1}{13254}$  ~~    \medskip             \\
$p(7) \geq \frac{365}{1701} = \frac{1}{4} \left( 1 - \frac{1}{7} \right) + \frac{1}{3402}$ ~~       &
$p(8) \geq \frac{19325}{87846} = \frac{1}{4} \left( 1 - \frac{1}{8} \right) + \frac{ 1739}{ 1405536}$ ~~ \medskip  \\
$p(9) \geq \frac{ 9448}{42483} = \frac{1}{4} \left( 1 - \frac{1}{9} \right) + \frac{ 22}{127449}$ ~~ &
$p(10) \geq \frac{ 83234}{369603} = \frac{1}{4} \left( 1 - \frac{1}{10} \right) + \frac{ 2933}{ 14784120}$ ~~ \medskip \\
$p(11) \geq \frac{ 18033}{ 79202} = \frac{1}{4} \left( 1 - \frac{1}{11} \right) + \frac{ 179}{ 435611}$ ~~ &
$p(12) \geq \frac{ 13511}{58482} = \frac{1}{4} \left( 1 - \frac{1}{12} \right) + \frac{ 871}{ 467856}$ ~~ \medskip \\
$p(13) \geq \frac{ 57931}{ 249696} = \frac{1}{4} \left( 1 - \frac{1}{13} \right) + \frac{ 4015}{ 3246048}$ ~~ &
$p(14) \geq \frac{ 16743}{71824} = \frac{1}{4} \left( 1 - \frac{1}{14} \right) + \frac{ 487}{ 502768}$ ~~ \medskip \\
$p(15) \geq \frac{ 36251}{154568} = \frac{1}{4} \left( 1 - \frac{1}{15} \right) + \frac{ 2777}{ 2318520}$ ~~ & \\

\end{tabular}

\end{center}
We take a moment to briefly describe the method in which these matrices were obtained. The matrix $D(5)$ was obtained by starting with the matrix
\[
R(5) := \begin{pmatrix} 3 & 0 & 0 \\ 0 & 3 & 0 \\ 0 & 0 & 3 \\ 3 & 0 & 0 \\ 0 & 3 & 0 \end{pmatrix}.
\]
The idea to use this matrix
as a starting point comes from the fact that the constructions in \cite{DR} are good for large values of $k$ and so, while they do not improve the lower bound $p(k) \geq \frac{1}{4} (1 - \frac{1}{k} )$ for small $k$, they still give a reasonable bound for small $k$.
The matrix $R(5)$ is a natural modification of the construction in \cite{DR}.
We then added a random 0-1 matrix to $R(5)$.
This was repeated many times until we found a new matrix that provided a better lower bound on $p(5)$ than the lower bound
given by $R(5)$.  This process was repeated until we arrived at the matrix $D(5)$ given above.  Looking closely at
each of the matrices above, one may be able to find the ``dominant diagonal" entries.  For instance in $D(8)$,
the positions
\[
(1,1), (2,2), (3,3), (4,4), (5,3),(6,4),(7,1),(8,2)
\]
contain entries that are larger than the other entries.  The initial matrix used to construct $D(8)$ is
\[
R(8):= \begin{pmatrix} 4 & 0 & 0 & 0 \\ 0 & 4 & 0 & 0 \\ 0 & 0 & 4 & 0 \\ 0 & 0 & 0 & 4 \\
0 & 0 & 4 & 0 \\ 0 & 0 & 0 & 4 \\ 4 & 0 & 0 & 0 \\ 0 & 4 & 0 & 0
\end{pmatrix}
\]

In the case that $k = 4$, the matrix
\[
D(4) = \left(
\begin{array}{cccccccc}
 6 & 2 & 5 & 8 & 4 & 6 & 6 & 9 \\
 7 & 5 & 7 & 6 & 5 & 5 & 4 & 4 \\
 4 & 5 & 7 & 6 & 5 & 8 & 4 & 7 \\
 7 & 2 & 5 & 8 & 4 & 5 & 6 & 8 \\
\end{array}
\right)
\]
was obtained by a more ad hoc method.  It leads to the lower bound
\[
p(4) \geq \frac{ 109513}{584064} = \frac{1}{4} \left( 1 - \frac{1}{4} \right)+ \frac{1}{584064}.
\]
The corresponding sequence on 4 symbols has 8 atoms.


\section{Proof of Theorem 1.4}\label{ub}

We follow the method of \cite{CEH}.
Let $G$ be an infinite graph that is $I_4$-free.
Let $C = C(G)= \{c_n \}$ be its associated sequence on the symbols $\{1,2,3,4 \}$.  Define three sequences $\{u_n \}$, $\{v_n \}$, and $\{ w_n \}$ as follows.

\begin{enumerate}

\item For $i \in \mathbb{N}$, $u_i = k$ if and only if $c_k \in \{2,3,4 \}$ and $| \{ r : c_r \in \{2,3,4 \} , r \leq k \} | = i$.

\item For $i \in \mathbb{N}$, $v_i = k$ if and only if $c_{u_k} \in \{3,4 \}$ and
$|  \{ r : c_r \in \{3,4 \} , r \leq u_k \}  | = i$.

\item For $i \in \mathbb{N}$, $w_i = k$ if and only if $c_{u_{v_k} }  \in \{4 \}$ and
$| \{ r : c_r \in \{ 4 \} , r \leq u_{v_k} \}  | = i$.
\end{enumerate}

We give the following example for convenience.  In several of our counting arguments it may be quite useful for the reader to refer back to this example.

\begin{center}
\begin{tabular}{|c|c|c|c|c|c|c|c|c|c|c|} \hline
$n$      & 1  & 2              & 3 &  4                & 5                 & 6               & 7 & 8                & 9               & 10                    \\ \hline
$c_n$   & 1 & 2              & 1  & 3                & 2                 & 4               & 1 & 4                & 2                & 3                      \\ \hline
$u_i$          & ~ & $u_1=2$ & ~ & $u_2 = 4$ & $u_3  = 5$ & $u_4 = 6$ & ~ & $u_5 = 8$ & $u_6 = 9$ & $u_7 = 10$  \\ \hline
$v_i$          & ~ &  ~            & ~ & $v_1 = 2$  & ~               & $v_2 = 4$ & ~ & $v_3 = 5$ & ~               & $v_4 = 7$    \\
\hline
$w_i$         &  ~ & ~             & ~ & ~               & ~               &  $w_1 = 2$ & ~ & $w_2 = 3$ & ~             & ~                 \\ \hline
\end{tabular}
\smallskip

\textbf{Example 1}
\end{center}

Given $n \in \mathbb{N}$, we call a 4-tuple of positive integers $(n,j,k,l)$ a \emph{cut} if
\begin{center}
$u_j \leq n < u_{j+1}$,~ $v_k \leq j < v_{k+1}$, ~and~ $w_l \leq k < w_{l+1}$.
\end{center}
For instance, $(9,6,3,2)$ and $(10,7,4,2)$ are cuts for the sequence in Example 1.

If $n \in \mathbb{N}$ and $(n,j,k,l)$ is a cut, then
\begin{equation}\label{first ub}
e(G_n) \leq \sum_{i=1}^{j} (u_i - i) + \sum_{i=1}^{k} (v_i - i) + \sum_{i=1}^{l} (w_i - i ).
\end{equation}
Indeed, the sum $\sum_{i=1}^{j} (u_i - i)$ counts all pairs of the form $(c_s , c_t)$ with
$1 \leq s < t \leq n$, $c_s = 1$, and $c_t \in \{2,3,4 \}$.  Similarly, the sum
$\sum_{i=1}^k (v_i - i)$ counts all pairs of the form $(c_s , c_t)$ with
$1 \leq s < t \leq n$, $c_s = 2$, and $c_t \in \{3,4 \}$.  The
sum $\sum_{i=1}^l ( w_i - i )$ counts all pairs of the form
$(c_s , c_t)$ with $1 \leq s < t \leq n$, $c_s = 3$, and $c_t=4$.

Given $S \subseteq \{1,2,3,4 \}$, define
\[
\alpha_S ( i) =  | \{ r : c_r \in S ~ \mbox{and}~ r < i \} |.
\]
We claim that if $(n,j,k,l)$ is a cut, then
\begin{equation}\label{double count 1}
jk = \sum_{i=1}^k v_i + \sum_{i=1}^j \alpha_{ \{3,4 \} } (u_i )
\end{equation}
and
\begin{equation}\label{double count 2}
kl = \sum_{i=1}^l w_i + \sum_{i=1}^k \alpha_{ \{4 \} } ( u_{v_i} ).
\end{equation}
To prove these equalities, we will count pairs of the form $(u_s , v_t)$ with $1 \leq s \leq j$ and $1 \leq t \leq k$, as well as
pairs of the form $(v_s , w_t)$ with $1 \leq s \leq k$ and $1 \leq t \leq l$.

Clearly there are $jk$ pairs $(u_s , v_t)$ with $1 \leq s \leq j$ and $1 \leq t \leq k$.  The sum
$\sum_{i=1}^k v_i$ counts all pairs $(u_s , v_t)$ for which $s \leq v_t$ while the sum
$\sum_{i=1}^j \alpha_{ \{3,4 \} } (u_i)$ counts all pairs $(u_s , v_t)$ for which $s  > v_t$.  This double counting is best illustrated by referring to Example 1.  In terms of Example 1, $v_t$ is precisely the number of $u_s$'s for $s \leq v_t$ so that
the sum $\sum_{i=1}^k v_i$ counts pairs where the $u_s$ is directly above or to the left of $v_t$.  The sum
$\sum_{i=1}^j \alpha_{ \{ 3,4 \} } (u_i)$ then counts all pairs where the $u_s$ is to the right of $v_t$.  This shows that
(\ref{double count 1}) holds and a similar argument gives (\ref{double count 2}).

Combining (\ref{first ub}), (\ref{double count 1}), and (\ref{double count 2}) we have
\begin{equation}\label{second ub}
e(G_n) \leq \sum_{i=1}^j ( u_i - \alpha_{ \{3,4 \} } (u_i ) ) - \sum_{i=1}^k \alpha_{ \{4 \} } (u_{v_i} ) + jk + kl
- \frac{1}{2} ( (j+1)^2 + (k+1)^2 + (l + 1)^2 )
\end{equation}
for any cut $(n,j,k,l)$.

To estimate the first sum, we use the following lemma.
\begin{lemma}[Czipszer, Erd\H{o}s, Hajnal \cite{CEH}]\label{lemma}
Let $s_n, t_n$ be nondecreasing sequences of natural numbers such that $s_n - t_n > 0$ for all $n \in \nn$.  There exists a sequence $n_1 < n_2 < \dots $ such that for all $r$,
\begin{equation*}
\sum_{i=1}^{n_r} (s_{i} - t_{i} ) \leq \frac{1}{2} n_r ( s_{n_r} - t_{n_r} ) + o ( s_{n_r}^{2} ).\label{lemma estimate}
\end{equation*}
\end{lemma}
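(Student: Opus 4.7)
Set $a_i = s_i - t_i > 0$ and $S_n = \sum_{i=1}^n a_i$. The claim is equivalent to showing $\liminf_{n \to \infty} (S_n - \tfrac{1}{2} n a_n)/s_n^2 \leq 0$, since choosing $n_r$ along any subsequence realizing this $\liminf$ immediately yields the stated inequality with error $o(s_{n_r}^2)$. I would argue by contradiction, assuming there exist $\epsilon > 0$ and $N$ such that
\[
S_n > \tfrac{1}{2} n a_n + \epsilon s_n^2 \qquad \text{for all } n \geq N.
\]
Two quick consequences are useful: because $a_i \leq s_i \leq s_n$ for $i \leq n$, the trivial bound $S_n \leq n s_n$ combined with $S_n > \epsilon s_n^2$ forces $s_n \leq n/\epsilon$, so $s_n$ grows at most linearly and the assumed surplus is of order $n^2$.

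The main tool would be the ``dual'' or layer-cake representation
\[
S_n \;=\; \sum_{x \geq 1} \bigl|\{\, i \leq n : t_i < x \leq s_i \,\}\bigr|,
\]
which is useful precisely because $s_i$ and $t_i$ are nondecreasing: each indicator set on the right is then an interval in $\nn$, whose length is controlled by the nondecreasing rank functions $f_n(x) = |\{i \leq n : s_i < x\}|$ and $g_n(x) = |\{i \leq n : t_i < x\}|$. Splitting the sum at $x = t_n$ and rearranging yields an exact identity expressing $S_n - \tfrac{1}{2} n a_n$ as a sum of deviations of $f_n$ and $g_n$ from the linear interpolations between their endpoints $0$ and $n$. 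The hypothesis $S_n - \tfrac{1}{2} n a_n \geq \epsilon s_n^2$ then says these deviations are $\Omega(s_n^2)$ uniformly in $n$; a pigeonhole argument over dyadic scales of $s_n$, exploiting the monotonicity of $f_n$ and $g_n$, produces a subsequence of $n$ along which the total deviation must instead be $o(s_n^2)$, which is the desired contradiction.

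The main obstacle is that $a_n = s_n - t_n$ is itself \emph{not} monotonic in $n$, even though $s_n$ and $t_n$ are. This is exactly why one cannot simply compare $S_n$ to $\tfrac{1}{2} n a_n$ by an averaging argument on $(a_i)$, and must instead decouple the problem into the two nondecreasing sequences $s_n$ and $t_n$ through their rank functions. A secondary technical difficulty is that the error term must be $o(s_{n_r}^2)$ rather than merely $O(s_{n_r}^2)$: the subsequence has to be chosen so that the ratio $(S_n - \tfrac{1}{2} n a_n)/s_n^2$ actually tends to a non-positive limit along it, not just remains bounded. This is the step where the dyadic pigeonhole on $s_n$ is essential, since $s_n$ takes only natural number values but can stagnate for long stretches before increasing.
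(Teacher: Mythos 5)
Your reduction to showing $\liminf_{n\to\infty}\bigl(S_n - \tfrac12 n a_n\bigr)/s_n^2 \leq 0$ (with $S_n=\sum_{i\le n}(s_i-t_i)$, $a_n=s_n-t_n$) is correct, and the layer-cake identity $S_n = \sum_{x\ge 1}|\{i\le n : t_i < x \le s_i\}| = \sum_{x}(g_n(x)-f_n(x))$ with the rank functions $f_n,g_n$ is a valid reformulation. But the plan stalls at exactly the step that carries the proof. You write that a ``pigeonhole argument over dyadic scales of $s_n$, exploiting the monotonicity of $f_n$ and $g_n$, produces a subsequence along which the total deviation must instead be $o(s_n^2)$.'' That sentence is not an argument: ``the deviation is $o(s_n^2)$ along a subsequence'' is the conclusion of the lemma restated through the rank functions, and no concrete pigeonhole mechanism is described. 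Monotonicity of $f_n$ and $g_n$ by itself does not make the deviations small; for a single fixed $n$ the quantity $\sum_{i\le n}s_i - \tfrac12 n s_n$ can be of order $n s_n$ (take $s_i$ essentially constant on $[1,n]$), which is comparable to $s_n^2$ in precisely the regime $s_n\asymp n$ that your contradiction hypothesis puts you in. The real content of the lemma is that such a deviation cannot persist with the same sign and size for \emph{all} large $n$ simultaneously, and nothing in the write-up supplies that.

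There is also a structural issue you should have caught: as stated (and as stated in the paper), the lemma is literally false, and your argument never invokes the hypothesis that rescues it. Take $s_n\equiv 2$, $t_n\equiv 1$: then $a_n\equiv 1$, $S_n=n$, $\tfrac12 n a_n = n/2$, and $(S_n-\tfrac12 na_n)/s_n^2 = n/8\to\infty$. Insisting $s_n\to\infty$ is still not enough: $s_n=1+\lceil\sqrt n\rceil$, $t_n=\lceil\sqrt n\rceil$ gives $(S_n-\tfrac12 na_n)/s_n^2\to \tfrac12$. What makes the lemma true in the application is that $s_j=u_j\ge j$, i.e.\ $s_n\ge n$, so that $n s_n\le s_n^2$ and the trivial bound $S_n\le n s_n$ already gives $Q_n=O(s_n^2)$. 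Your derivation $s_n\le n/\epsilon$ is fine, but without the companion lower bound $s_n\ge n$ you are not in the regime $s_n\asymp n$, and your remark that ``the assumed surplus is of order $n^2$'' does not follow from anything written (you obtained an upper bound on $s_n$, not a lower bound on the surplus). Any correct proof has to use $s_n\ge n$ somewhere, and since your plan does not, it cannot close.
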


We apply Lemma~\ref{lemma} to the sequence
\[
\{ u_j - \alpha_{ \{3,4 \} } (u_j) \}_{j=1}^{ \infty}
\]
to obtain a sequence $j_1 < j_2 < \dots $ such that
\begin{equation}\label{estimate 2}
\sum_{i=1}^{ j_r } (u_i - \alpha_{ \{3,4 \} }  (u_i) ) \leq \frac{1}{2} j_r  ( u_{ j_r } - \alpha_{ \{3,4 \} } ( u_{j_r} ) ) + o ( u_{j_r }^{2} )
\end{equation}
for all $r \in \mathbb{N}$.  For $r \in \mathbb{N}$, define sequences $n_r$, $k_r$, and $l_r$, in terms of $j_r$, by
\begin{enumerate}
\item $n_r = u_{j_r}$,
\item $k_r$ is the largest index for which $v_{k_r} \leq  j_r$, and
\item $l_r$ is the largest index for which $w_{l_r} \leq k_r$.
\end{enumerate}
We then consider the sequence $\{ ( n_r , j_r , k_r , l_r ) \}_{r = 1}^{ \infty}$ of cuts.  By considering these sequence of
cuts, we are now looking at the subgraphs $G_{n_1}$, $G_{n_2} , \dots$ and for these subgraphs, we know that
(\ref{estimate 2}) holds.

For any $r$, we have $u_{j_r} \geq j_r \geq k_r \geq l_r$ so that
\begin{equation}\label{estimate 3}
j_r = o( u_{j_r}^2 ) ~~ \mbox{and} ~~k_r = o (u_{j_r}^2 ) ~~ \mbox{and} ~~ l_r = o( u_{j_r}^2 )
\end{equation}
as $u_{j_r} \rightarrow \infty$.
Furthermore,
\begin{equation}\label{estimate 4}
\sum_{i=1}^{k_r} \alpha_{ \{ 4 \} } (   u_{v_i} ) \geq ( l_r - 1) + (l_r - 2) + \dots + 2 + 1.
\end{equation}
In terms of Example 1, the sum on the left hand side of (\ref{estimate 4}) moves across the entries in the $v_i$ row, and
counts 4's that are above and to the left of the current entry.
A simple calculation shows that one would have
$\sum_{i=1}^{4} \alpha_{ \{4 \} } (u_{v_i} )  = 0 + 0 + 1 + 2 $ for Example 1.
The sum $\sum_{i=1}^{k_r} \alpha_{ \{ 4 \} } ( u_{v_i } )$ is minimized when all of the
$l_r$ 4's that appear in the cut $(n_r , j_r , k_r , l_r )$ come after all of the 3's in the cut.
Lastly, for any $r$,
\begin{equation}\label{estimate 5}
\alpha_{ \{3,4 \} } ( u_{ j_r} ) \geq k_r-1.
\end{equation}
To see this inequality, one notes that since $k_r$ is the largest index for which
$v_{k_r} \leq j_r$, there must be $k_r -1$ terms of the sequence $c_n$ that are 3 or 4 and
come before $u_{j_r}$.

Combining (\ref{second ub}), (\ref{estimate 2}), (\ref{estimate 4}), and (\ref{estimate 5}) gives
\begin{equation}\label{estimate 6}
e(G_{n_r} ) \leq \frac{1}{2} j_r (u_{j_r } - k_r )
- \frac{1}{2} ( l_r - 1)^2 + j_r k_r + k_r l_r
- \frac{1}{2} ( (j_r + 1)^2 + (k_r + 1)^2 + (l_r + 1)^2 ) + o(u_{j_r}^2 )
\end{equation}
for any $r \in \mathbb{N}$.  We divide through by $n_r^2 = u_{j_r}^2$ to get
\begin{equation}\label{estimate 7}
\dfrac{ e (G_{n_r} ) }{ n_r^2 } \leq \dfrac{1}{ u_{j_r}^2 } \left(
\frac{ j_r u_{j_r} }{2} + \frac{ k_r j_r }{2} - \frac{1}{2} (l_r - 1)^2 + k_r l_r - \frac{1}{2} (j_r^2 + l_r^2 + k_r^2 ) \right) + o(1).
\end{equation}

Since $l_r \leq k_r$, we can write $l_r = \epsilon_r k_r$ for some $0 \leq \epsilon _r \leq 1$.
The remaining analysis does not depend on $r$ and so, for ease of notation, we
\begin{center}
{\it omit all occurrences of $r$ as a subscript on all terms.}
\end{center}
Using $l = \epsilon k$, the inequality (\ref{estimate 7}) can be rewritten as
\begin{equation}\label{estimate 8}
\frac{ e(G_n) }{ n^2} \leq \frac{1}{ u_j^2 }
\left( \frac{ j u_j }{2} + \frac{kj}{2} - \frac{j^2}{2} - k^2 ( \epsilon^2 - \epsilon + \frac{1}{2} ) + l \right) + o(1).
\end{equation}
The minimum value of $f( \epsilon ) = \epsilon^2 - \epsilon + \frac{1}{2}$ with $0 \leq \epsilon \leq 1$ is $\frac{1}{4}$ and occurs
when $\epsilon = \frac{1}{2}$.  Thus (\ref{estimate 8}) together with (\ref{estimate 3}) implies
\begin{equation*}
\frac{ e(G_n)}{n^2} \leq \frac{ 1}{u_j^2} \left( \frac{ j u_j }{2} + \frac{kj}{2} - \frac{j^2}{2} - \frac{k^2}{4}  \right) + o(1).
\end{equation*}
We can rewrite this as
\begin{equation*}
\frac{ e(G_n) }{n^2} \leq \frac{1}{u_j^2} \left( \frac{j u_j}{2} - \frac{j^2}{2} + \frac{j}{2} k ( 1  - \frac{k}{2j} ) \right) + o(1).
\end{equation*}
The maximum value of $g(k) = k (1 - \frac{k}{2j} )$ with $0 \leq k \leq j$ is $\frac{j}{2}$ and occurs when $k = j$.  Therefore,
\begin{equation*}
\frac{ e(G_n) }{n^2} \leq  \frac{1}{u_j^2} \left( \frac{j u_j}{2} - \frac{j^2}{2} + \frac{j^2}{4}  \right) + o(1) =
\frac{1}{ u_j^2} \left( \frac{ j u_j}{2} - \frac{j^2}{4} \right) + o(1) .
\end{equation*}
We now have
\begin{equation*}
\frac{ e(G_n) }{n^2} \leq \frac{1}{u_j^2} \left( \frac{ju_j}{2} - \frac{j^2}{4} \right) + o(1) = \frac{1}{4} -
\frac{1}{4} \left( \frac{j}{u_j} - 1 \right)^2 + o(1) \leq \frac{1}{4} + o(1).
\end{equation*}
This shows that for any $r \in \mathbb{N}$, $e(G_{n_r} ) \leq n_r^2 \left( \frac{1}{4} + o(1) \right)$ where
$o(1) \rightarrow 0$ as $r \rightarrow \infty$.  We conclude that
\[
\liminf_{n \rightarrow \infty} \frac{ e(G_n) }{n^2} \leq \frac{1}{4}.
\]
This completes the proof of Theorem \ref{4 upper bound}.



\section{Appendix}

In this section we give the Mathematica \cite{mathematica} code that is used to evaluate the optimization problem from Lemma \ref{count 3} once we have chosen a matrix $D$.  The function \textbf{lowerbound} depends on four inputs $d$, $m$, $l$, and $k$.  The $d$ represents the matrix $D$ in Lemma \ref{count 3} and the $m$ represents the $M$ in Lemma \ref{count 3}.  The input $l$ is the number
of columns of the input $d$ and the input $k$ is the number of rows of $d$.  While $d$ determines $m$, we found it
easier to use a command to produce $m$ first, and then enter this as an input into \textbf{lowerbound}.   The code
for obtaining $m$ from $d$ is
\begin{verbatim}
m = Table[(Sum[d[[a,i]](Sum[d[[b, j]],{b,a+1,k}]),{a,1,k-1}]),
{i,l},{j,l}]
\end{verbatim}

The code for \textbf{lowerbound} is

\begin{verbatim}
lowerbound[d_, m_, l_, k_] := Min[
  Table[
   Minimize[
     {
      (
        (1/3) ( Sum[ m[[i, j]], {i, l}, {j, l} ] ) +
          (1/3) ( Sum[ m[[i, j]] , {i, l - 1}, {j, i + 1, l}] ) +
          (1/6) ( Sum[ m[[i, i]] , {i, l} ] ) +
           Sum[

          Sum[ m[[i, y]], {i, 1, l}] + (1/2) m[[y, y]] +
           Sum[ m[[i, y]], {i, 1, y - 1} ] , {y, 1, t - 1}
                ] +
         x ( Sum[ m[[i, t]] , {i, l} ] ) + x^2 (1/2) m[[t, t]] +
         x ( Sum[ m[[i, t]], {i, t - 1} ] )
        )/
       (
        ( Sum[ d[[i, j]] , {i, k}, {j, l} ] +
           Sum[Sum[ d[[i, y]], {i, 1, k}], {y, 1, t - 1}] +
           x ( Sum[ d[[i, t]] , {i, k} ] )
          )^2
         )
      , 0 <= x <= 1 }, {x} ][[1]] , {t, 1, l}]]
  \end{verbatim}

\end{document}